\documentclass[11pt]{article}
\usepackage{graphicx}
\usepackage{epstopdf}
\usepackage{amsmath, amssymb}
\usepackage{latexsym, color}
\usepackage{mathtools}
\usepackage{xcolor}
\def\la{\big\langle}
\def\ra{\big\rangle}

\def\ds{\displaystyle}
\def\forall{\hbox{for all}~}
\def\L{{\bf L}}

\def\bfn{{\bf n}}
\def\bfe{{\bf e}}

\def\ve{\varepsilon}

\def\E{{\cal E}}
\def\A{{\cal A}}

\def\H{{\cal H}}\def\caL{{\cal L}}

\def\R{I\!\!R}

\def\implies{\Longrightarrow}
\def\vp{\varphi}

\def\F{{\cal F}}

\def\v{\vskip 1em}
\def\O{{\cal O}}

\def\C{{\cal C}}

\def\H{{\cal H}}
\def\T{{\cal T}}

\def\bega{\begin{array}}
\def\enda{\end{array}}
\def\begi{\begin{itemize}}
\def\endi{\end{itemize}}
\def\ov{\overline}

\def\Tilde{\widetilde}
\def\Hat{\widehat}

\def\meas{\hbox{meas}}
\def\bel{\begin{equation}\label}
\def\eeq{\end{equation}}
\def\sqr#1#2{\vbox{\hrule height .#2pt
\hbox{\vrule width .#2pt height #1pt \kern #1pt
\vrule width .#2pt}\hrule height .#2pt }}
\def\square{\sqr74}
\def\endproof{\hphantom{MM}\hfill\llap{$\square$}\goodbreak}
\definecolor{cadmiumgreen}{rgb}{0.0, 0.42, 0.24}

\newtheorem{theorem}{Theorem}[section]

\newtheorem{lemma}{Lemma}[section]
\newtheorem{proposition}{Proposition}[section]
\newtheorem{remark}{Remark}[section]
\newtheorem{definition}{Definition}[section]
\newtheorem{example}{Example}[section]
\newtheorem{conjecture}{Conjecture}[section]

\begin{document}

\title{\bf On the Value Function for a Class of Set Motion Problems}

\author{Alberto Bressan$^{(1)}$, Maria Teresa Chiri$^{(2)}$ and Elsa M. Marchini$^{(3)}$\\
\, \\
{\small $^{(1)}$Department of Mathematics, Penn State University, University Park, PA~16802, USA.}\\
{\small $^{(2)}$~Department of Mathematics and  Statistics, Queen's University,
Kingston, ON K7L3N6,
Canada.}\\
{\small $^{(3)}$Dipartimento di Matematica, Politecnico di Milano,
Piazza L.\,da Vinci, 32 - 20133 Milano, Italy.}\\
\, \\
{\small E-mails: axb62@psu.edu,~maria.chiri@queensu.ca, ~elsa.marchini@polimi.it.}
}
\maketitle

\begin{abstract} The paper considers a class of optimal set motion problems, where the control
determines the velocity of boundary points in the normal direction.
Results and counterexamples are given, on the continuous dependence of the minimum cost
from the initial configuration.
The last sections of the paper study the slicing cost of bounded open sets.
For strongly Lipschitz
domains, it is proved that the slicing cost varies continuously w.r.t.~the Hausdorff distance
of the boundaries.    We conjecture that the slicing cost for a disc is maximal among all sets with the same area.
\end{abstract}

\v

\section{Introduction}
\label{s:1}
\setcounter{equation}{0}
We consider a family of geometric evolution problems, modeling the spatial control 
of an invasive population 
\cite{BCS1}.
For each time $t\in [0,T]$, we denote by $\Omega(t)\subset\R^2$ a set moving in the plane.
This can be regarded as a ``contaminated region", which we would like to shrink as 
much as possible.   To control the evolution of this set, we assign the 
velocity $\beta=\beta(t,x)$ in the inward normal direction at every boundary point $x\in \partial \Omega(t)$.

A function $E(\beta)\geq 0$ is given, 
describing the {\bf effort} needed to push the boundary of $\Omega(t)$ inward, with speed $\beta$ in the 
normal direction (see Fig.~\ref{f:sm345}). 
 The {\bf total control effort} at time $t\in [0,T]$
is then defined as
\bel{Et} \E(t)~\doteq~
\int_{\partial\Omega(t)}E\bigl(\beta(t,x)\bigr)\, \H^1(dx),
\eeq
where the integral is computed w.r.t.~the 1-dimensional Hausdorff measure
along the boundary of $\Omega(t)$. 
In this paper we focus on the case where
\bel{E} E(\beta)~=~\max\bigl\{ 0, 1+\beta\bigr\}.\eeq
Given a constant $M>0$ accounting for the maximum control effort, we consider set motions
$t\mapsto \Omega(t)$
which satisfy the constraint
\bel{EM} \E(t)~\leq~M\qquad\forall ~t\in [0,T].\eeq

\begin{figure}[ht]
\centerline{\hbox{\includegraphics[width=6cm]{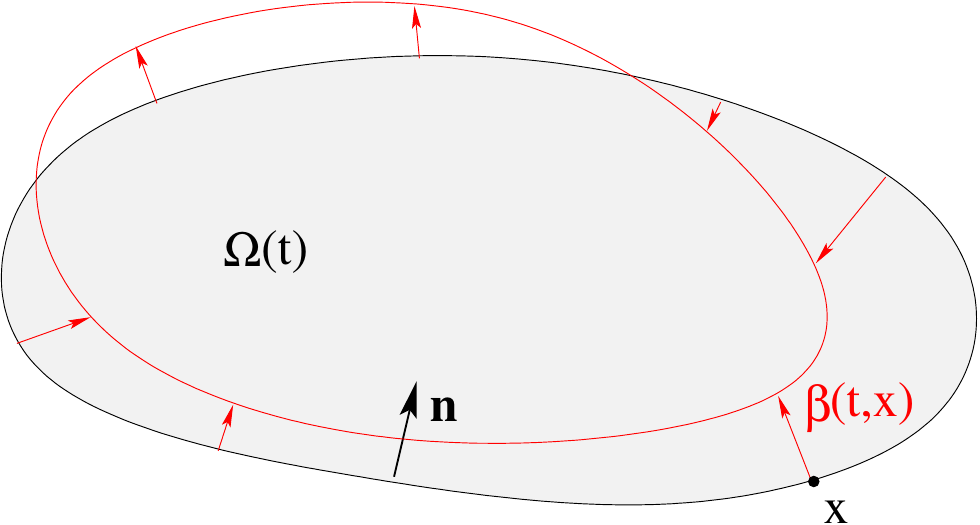}}
\qquad\qquad \hbox{\includegraphics[width=5cm]{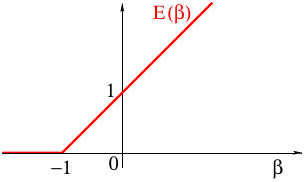}}}
\caption{\small Left: a moving set, where the evolution is determined by assigning the inward normal speed $\beta$
at each boundary point.
Right: the effort function $E(\beta)$ in (\ref{E}).
}
\label{f:sm345}
\end{figure}

This models a situation where:
\begi
\item If the control effort is everywhere zero: $E(\beta)=0$, then the inward normal speed is $\beta= -1$
at every point. 
Hence the contaminated set $\Omega(t)$ expands with unit speed in all directions.
In particular, its area increases at a rate equal to the perimeter:
$${d\over dt} \caL^2\bigl(\Omega(t)\bigr)~=~- \int_{\partial \Omega(t)} \beta(t,x) \, \H^1(dx)~=~ 
\H^1\bigl(\partial \Omega(t)\bigr).$$
\item By implementing a control with total effort  $\E(t)= M$, we can clean up a region of area $M$ per unit time:
$$\bega{l}\ds {d\over dt} \caL^2\bigl(\Omega(t)\bigr)~=~-\int_{\partial \Omega(t)}  \beta(t,x) \, \H^1(dx)\\[4mm]
\ds\quad =~
\int_{\partial \Omega(t)} \Big[1 - E\bigl(\beta(t,x)\bigr)\Big] \, \H^1(dx)~=~\H^1\bigl(\partial \Omega(t)\bigr)-M.\enda$$
\endi
Here and in the sequel, $\H^1$ denotes the 1-dimensional Hausdorff measure while  $ \caL^2$ is the 2-dimensional 
Lebesgue measure. 

We say that the set motion $t\mapsto\Omega(t)$ is {\bf admissible} if 
it satisfies the constraint (\ref{EM}).  

%
%
%
%
%

In the above setting, two basic problems can be formulated.

\begi
\item[{\bf (EP)}] {\bf Eradication Problem.}  
{\it   Let an initial set $\ov\Omega\subset\R^2$ and a constant $M>0$
be given. Find an admissible set-valued function $t\mapsto \Omega(t)$  such that, for some $T>0$,
\bel{nco}\Omega(0)~=~\ov\Omega,\qquad\qquad 
\Omega(T)~=~\emptyset,\eeq
}
\endi
\v
\begi
\item[{\bf (MTP)}]  {\bf Minimum Time Problem.}  {\it  Among all admissible strategies that satisfy
(\ref{nco}), find one which minimizes the time $T$.}
\endi

Notice that the constant $M$ puts an upper bound on the total control effort at each time $t$.
For example, in a pest eradication problem, there will be an upper bound
on the amount of pesticides that can be sprayed per unit time.  
If $M$ is too small, compared with the size of the contaminated region, 
it may not be possible to completely eradicate the invasive population.  

More generally, even if the Eradication Problem does not have a solution,
one can consider an optimization problem, minimizing the size of the contaminated set over time.
\v
\begi
\item[{\bf (OP)}]  {\bf Optimization Problem.}  {\it  Given an initial set $\ov\Omega$ and a constants $\kappa >0$, find an admissible motion 
$t\mapsto \Omega(t)$ 
that minimizes the cost functional 
$$J(\Omega)~\doteq~\kappa\int_\tau^T \caL^2\bigl(\Omega(t)\bigr)\, dt +
\caL^2\bigl(\Omega(T)\bigr)$$
with initial data
$ \Omega(\tau)~=~\ov\Omega$.
}
\endi

The main results in \cite{BCS2} provide the existence of optimal solutions,
together with necessary conditions for optimality.  
In the case where the initial set $\ov\Omega\subset\R^2$ is convex, the optimal solution
to  {\bf (MTP)} and {\bf (OP)}  has been explicitly determined in 
\cite{BBC}.

In the present paper we focus on the value functions for the optimization problems 
{\bf (MTP)} and {\bf (OP)}.  More precisely

\begin{definition}
The {\bf minimum time function} for the problem {\bf (MTP)}, denoted by $\T(\ov\Omega)$, is the infimum among all times $T>0$ such that there exists an admissible motion $t\mapsto \Omega(t)$
 with $\Omega(0)=\ov\Omega$ and $\Omega(T)=\emptyset$.
\end{definition}

\begin{definition} Given $\kappa>0$,
the {\bf value function} corresponding to the optimization problem {\bf (OP)}, denoted by 
$V(\tau, \ov\Omega)$ is defined as
\bel{Vto} V(\tau,\ov\Omega)~\doteq~\inf_{\Omega\in \A}
\left\{ \kappa \int_\tau^T \caL^2\bigl(\Omega(t)\bigr)\, dt +
\caL^2\bigl(\Omega(T)\bigr)\right\}.\eeq
Here the infimum is sought among all admissible motions $t\mapsto \Omega(t)$, with $\Omega(\tau)=\ov\Omega$.
\end{definition}

A positive result on the continuity of these functions, and a counterexample, will be given in 
Section~\ref{sec:2} and \ref{sec:3}, respectively.

We shall also consider similar control problems for moving sets, but with 
geographical constraints \cite{BMS}.  These models describe an invasive biological population within an island,
where the sea provides a natural barrier to its expansion.
More precisely,
given an open set $G\subset\R^2$
we impose the additional constraint
$\Omega(t)\subseteq G$.
Since the population cannot propagate outside $G$,
 the instantaneous control effort (\ref{Et}) is now replaced by
\bel{CEt} \E(t)~\doteq~
\int_{\partial\Omega(t)\cap G}E\bigl(\beta(t,x)\bigr)\, \H^1(dx).
\eeq
Notice that in (\ref{CEt}) the effort is integrated only over the relative boundary of $\Omega(t)$,
contained inside the open set $G$.

The three problems {\bf (EP)}, {\bf (MTP)} and {\bf (OP)}  can now be formulated 
in the same way as before, replacing (\ref{Et}) with (\ref{CEt}).

Assuming that the initial contamination is spread over the entire island: $\Omega(0)=G$, 
we study the continuity of the minimum time function $\T(G)$ as the set $G\subset\R^2$ varies.
\v
The second part of the paper is devoted to the value function for a  family of geometric problems.
As shown in \cite{BCM26}, these can be obtained as a limiting case of the eradication problems,
for suitable values of the parameters.

\begin{definition} Let $G\subset\R^2$  be a bounded open 
set with finite perimeter, and call  $T=\caL^2(G)$ its area.
By a {\bf slicing} of $G$ we mean a set-valued map $t\mapsto\Omega(t)\subseteq G$, $t\in [0,T]$, with the following properties:
\bel{Om1}
0\leq t_1 < t_2\leq T\qquad\implies\qquad \Omega(t_1)\subset\Omega(t_2),\eeq
\bel{Om2} \caL^2\bigl( \Omega(t)\bigr)~=~t\qquad\quad\forall t\in [0,T].\eeq
\end{definition}
Among all slicings of the set $G$, we seek one that minimizes the average length of the 
relative boundaries. This leads to
\begi
\item[{\bf (OSP)}] {\bf (Optimal Slicing Problem).} {\it Let $G\subset\R^2$  be a bounded open 
set with finite perimeter and area $T = \caL^2(G)$.
Among all slicings $t\mapsto\Omega(t)$ of the set $G$, minimize the integral
$$J(\Omega)~=~\int_0^T \H^1\bigl( G\cap\partial \Omega(t)\bigr)\, dt.$$
}\endi
Sweepout problems of this type, where a one-parameter family of curves or regions is required to traverse a surface while minimizing a length-type cost, have also been studied in connection with widths and min-max constructions; see e.g.\ \cite{CL19, L14}.

Calling $V(G)$ the infimum slicing cost for the set $G$, in Section~\ref{sec:4} we study the continuous
dependence of this cost  on the set $G$.

We conjecture that, among all sets of the same area, the disc has the 
largest slicing cost.   Partial results in this direction are discussed  in Section~\ref{sec:5}.
An explicit computation of the slicing cost for a disc is given in the Appendix.

\section{Continuity of the value function}
\label{sec:2}
\setcounter{equation}{0}

In the regular case where the boundaries $\partial\Omega(t)$ admit a $\C^1$ parameterization,
the definitions of interior normal vector and of normal velocity $\beta$ in the inward direction are
clear.   However, in general the existence of optimal set motions can be achieved only within a family of sets with finite perimeter. In such case the normal vector is well defined only a.e.~w.r.t.~the Hausdorff measure.  As a preliminary, we recall here the precise definition of admissible motion, as introduced in \cite{BCS2}.

 Call $\F$ the family of all sets
$\Omega\subset [0,T]\times\R^2$ with finite perimeter.  Each $\Omega\in \F$ determines a 
set-motion
\bel{12}t~\mapsto~\Omega(t)~\doteq~\bigl\{ x\,;~~(t,x)\in \Omega\bigr\}.\eeq
For every point $(t,x)\in \partial^\star \Omega$ in the reduced boundary of $\Omega$ (see \cite{AFP, M} for 
a precise definition), 
let $\nu(t,x)= (\nu_0, \nu_1, \nu_2)\in \R^3$ be the (inward pointing) unit normal vector.
%

The (inward) normal velocity of the set 
$\Omega(t)$  at the point 
$(t,x)\in \partial^\star\Omega$ is then computed by
\bel{bdef}\beta~=~ {-\nu_0\over \sqrt{\nu_1^2 + \nu_2^2}}\,.\eeq
Recalling that $E(\beta)\doteq\max\{ 1+\beta, 0\}$,  the instantaneous effort is thus computed by
$$\E(t)
~=~\int_{\partial\Omega(t)} E\left({-\nu_0\over \sqrt{\nu_1^2 + \nu_2^2}}\right)
\,\H^1(dx)
~=~\int_{\partial\Omega(t)} \max\left\{ {-\nu_0+ \sqrt{\nu_1^2 + \nu_2^2}\over  \sqrt{\nu_1^2 + \nu_2^2}}\,,~0\right\}
\,\H^1(dx).$$
Therefore, integrating over a time interval $t\in \,]t_1,t_2[$ one finds
$$\int_{t_1}^{t_2} \E(t)\, dt ~=~\int_{\partial^*\Omega\cap\{(t,x);\, t_1<t<t_2\}} 
\max\Big\{ -\nu_0 +\sqrt{\nu_1^2+\nu_2^2}\,,~0\Big\}\, d\H^2\,.$$

To  impose the requirement that $\E(t)\leq M$ for all $t$,    we consider the convex, positively homogeneous
function $L:\R^3\mapsto \R$, defined as
$$L(v) ~=~L(v_0,v_1,v_2)~\doteq~\max\Big\{ -v_0 +\sqrt{v_1^2+v_2^2}\,,~0\Big\}.$$\begin{definition}\label{d:13}
Given $M>0$, we say that  a set with finite perimeter $\Omega\in\F$ represents an  
{\bf admissible motion}, as in (\ref{12}), and write
$\Omega\in \A$, if for every $0\leq t_1<t_2\leq T$ one has
$$\int_{\partial^*\Omega\cap\{(t,x);\, t_1<t<t_2\}} L\bigl(\nu(t,x)\bigr)\, d\H^2~\leq~M(t_2-t_1).$$
\end{definition}

Next, we look at the value function for the problem {\bf (OP)}.

It is a trivial observation that 
$$\ov\Omega_1\subseteq \ov\Omega_2\qquad\implies\qquad V(\tau,\ov\Omega_1)\leq V(\tau, \ov\Omega_2).$$
In the remainder of this section we study the continuity properties of
the map $\ov\Omega\mapsto V(\ov\Omega)$.

For a given $\kappa>0$, we denote by
$\F_\kappa$ the family of all bounded sets $\Omega\subset\R^2$ whose perimeter
satisfies $$\H^1(\partial \Omega)~\leq ~\kappa.$$
A distance on $\F_\kappa$ can be defined in terms of the measure of the symmetric difference:
\bel{DFK}
d(\Omega, \Omega')~\doteq~\caL^2(\Omega~\Delta~\Omega')~=~\big\|
{\bf 1}_{\Omega}-{\bf 1}_{\Omega'}\bigr\|_{\L^1}\,.\eeq
Here and in the sequel, ${\bf 1}_A$ denotes the characteristic function of a set $A$.

\begin{proposition}\label{p:21} Assume $M> 2\kappa$.   Then, for any $T>0$,  the value function 
$V(\tau,\ov\Omega)$ at (\ref{Vto}) is Lipschitz continuous restricted to $[0,T]\times \F_\kappa$.
\end{proposition}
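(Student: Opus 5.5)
We prove Lipschitz continuity separately in the set variable and in the time variable, and combine the two bounds by the triangle inequality. In both parts the idea is to take an almost optimal motion for one datum and build from it, at small extra cost, an admissible motion for the other datum.

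\emph{Dependence on the set.} Fix $\tau$ and $\ov\Omega_1,\ov\Omega_2\in\F_\kappa$, and set $\delta=\caL^2(\ov\Omega_1\Delta\ov\Omega_2)$. The key auxiliary claim is a \emph{cleanup lemma}: since $M>2\kappa$, the region $\ov\Omega_2\setminus\ov\Omega_1$ can be removed at a rate comparable to $M-\kappa$. More precisely, starting from $\ov\Omega_2$ we want an admissible motion that stays inside $\ov\Omega_1\cup\ov\Omega_2$ and reaches a set $\subseteq\ov\Omega_1$ (up to null sets) within a time $t_\delta\le C\delta$.

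To build it, we split the budget: use effort $\kappa$ to hold the boundary fixed (speed $\beta=0$ costs $E=1$ per unit length, so at most $\kappa$ if the perimeter stays $\le\kappa$), and spend the remaining $M-\kappa>\kappa$ pushing the boundary inward across $\ov\Omega_2\setminus\ov\Omega_1$, which removes area at rate about $M-\kappa$.

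The delicate point is that the perimeter may grow during the cleanup. To control it, we use the fact that $\ov\Omega_1\cap\ov\Omega_2$ and $\ov\Omega_1\cup\ov\Omega_2$ have perimeter at most $2\kappa$ (submodularity of perimeter). This is exactly where the hypothesis $M>2\kappa$ enters: it leaves a positive margin $M-2\kappa$ of effort for area removal even while holding a boundary of length up to $2\kappa$.

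For a rigorous version in the finite-perimeter framework of Definition~\ref{d:13}, it may be simpler to take the motion $\Omega(t)=\ov\Omega_1\cup\bigl(\ov\Omega_2\cap\{\text{a family shrinking at controlled rate}\}\bigr)$, or to interpolate through a sweep along a level-set slicing of ${\bf 1}_{\ov\Omega_2\setminus\ov\Omega_1}$. One then checks the space-time bound $\int L(\nu)\,d\H^2\le M(t_2-t_1)$ via the coarea formula.

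With the cleanup lemma in hand, the set estimate follows. Given an $\varepsilon$-optimal motion $\Omega_1(\cdot)$ for $(\tau,\ov\Omega_1)$, define a motion from $\ov\Omega_2$ as follows:
\begin{itemize}
\item on $[\tau,\tau+t_\delta]$, run the cleanup while taking the union with the expanding $\Omega_1$ frozen or slightly delayed;
\item afterwards, follow the trajectory $\Omega_1$ time-shifted by $t_\delta$.
\end{itemize}
The running cost differs by at most $\kappa\cdot(\text{area bound})\cdot t_\delta$ plus the cost of the shift. Here we use that areas grow at rate at most equal to the perimeter, so they are Lipschitz in time with a constant depending on $\kappa$ and $T$. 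The terminal term differs by $O(\delta)$. This gives $V(\tau,\ov\Omega_2)\le V(\tau,\ov\Omega_1)+C\delta$, and by symmetry the reverse inequality.

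\emph{Dependence on the time.} For $\tau<\tau'$ and the same initial set, we compare the two problems by holding $\Omega$ fixed on $[\tau,\tau']$, which costs at most $\kappa\le M$. Since $\tau'-\tau$ is small, the extra integral contributes only $O(\tau'-\tau)$. In the other direction, we truncate or shift an almost optimal trajectory. Both directions use the a priori bound $\caL^2(\Omega(t))\le \caL^2(\ov\Omega)+\kappa' t$ along admissible motions.

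The main obstacle is the cleanup lemma, specifically keeping the perimeter of the intermediate sets bounded by roughly $2\kappa$, so that the area-removal rate stays uniformly positive, while expressing the construction as an admissible finite-perimeter set in space-time.
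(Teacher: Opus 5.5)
Your architecture matches the paper's. Lipschitz dependence on $\tau$ comes from freezing $\ov\Omega$ on $[\tau,\tau']$ (admissible since $M\ge\kappa$) and from time-shifting an optimal motion. Lipschitz dependence on the set comes from passing by monotonicity to $\ov\Omega_1\cup\ov\Omega_2$, cleaning it down to $\ov\Omega_1$ in time $O(\delta)$, and then following an optimal motion from $\ov\Omega_1$.

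\textbf{The gap.} The cleanup lemma is the only place where $M>2\kappa$ is used, and you leave it open. Submodularity bounds the perimeter of the two endpoint sets $\ov\Omega_1\cup\ov\Omega_2$ and $\ov\Omega_1\cap\ov\Omega_2$, but says nothing about the sets in between. What is needed is a bound that holds at every time. Along a shrinking family one has $\beta\ge 0$, hence $\E(t)=\H^1(\partial\Omega(t))+\bigl|\frac{d}{dt}\caL^2(\Omega(t))\bigr|$. A single intermediate set with perimeter $\ge M$ therefore makes the motion inadmissible, however time is reparametrized. This is also why the rate is $M-2\kappa$ rather than your initial $M-\kappa$: the boundary being held can have length up to $2\kappa$.

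Your suggested coarea/level-set sweep does not give this. It controls the perimeters of level sets of an auxiliary function only on average in the level parameter, and the level sets of ${\bf 1}_{\ov\Omega_2\setminus\ov\Omega_1}$ are trivial anyway. The alternative, \emph{a family shrinking at controlled rate}, is never specified. So the step as written does not go through.

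\textbf{The missing idea: a half-plane sweep.} Fix a unit vector $\bfe$ and set $\Omega^\sharp(\zeta)=\ov\Omega_1\cup\{x\in\ov\Omega_2;~\langle\bfe,x\rangle\le\zeta\}$.
\begin{itemize}
\item Intersecting a bounded set with a half-plane does not increase the length of its boundary: the new segment on the cutting line is covered by the orthogonal projection of the discarded part of the boundary. Hence $\H^1(\partial\Omega^\sharp(\zeta))\le\H^1(\partial\ov\Omega_1)+\H^1(\partial\ov\Omega_2)\le 2\kappa$ for every $\zeta$.
\item Since $\zeta\mapsto\caL^2(\Omega^\sharp(\zeta))$ is continuous and monotone, you can choose $\zeta(t)$ so that the area decreases at the constant rate $M-2\kappa$.
\item The effort is then at most $2\kappa+(M-2\kappa)=M$, and $\ov\Omega_1$ is reached after time at most $\delta/(M-2\kappa)$.
\end{itemize}
With this, your composite motion works, provided the union is taken with the frozen $\ov\Omega_1$. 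Taking it with the moving $\Omega_1(t)$ would again leave the perimeter and the effort uncontrolled.

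\textbf{Two smaller corrections.}
\begin{itemize}
\item The terminal-cost comparison for the time shift uses the lower bound $\frac{d}{dt}\caL^2(\Omega(t))\ge -M$ (area cannot be removed faster than $M$), not a bound on the growth of the area.
\item The a priori bound $\caL^2(\Omega(t))\le\caL^2(\ov\Omega)+\kappa' t$ fails for general admissible motions, since expanding faster than unit speed costs no effort. It is also not needed: the frozen set has area at most $\kappa^2/(4\pi)$ by the isoperimetric inequality.
\end{itemize}
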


{\bf Proof.} {\bf 1.} To prove Lipschitz continuity w.r.t.~the time $\tau$, assume 

Let $t\mapsto \Omega(t)$, $t\in [\tau,T]$ be an optimal strategy achieving the 
minimum cost $V(\tau,\ov\Omega)$.   Then the strategy
$$t~\mapsto~\Omega(t-(\tau'-\tau)\bigr)$$
achieves a cost 
\bel{com1} J(\Omega)~\leq~ V(\tau,\ov\Omega)+ M(\tau'-\tau).\eeq

On the other hand, let $t\mapsto \Omega'(t)$, $t\in [\tau',T]$  be an optimal strategy 
achieving the minimum cost $V(\tau',\ov\Omega)$.    
Consider the strategy
$$\Tilde\Omega(t)~=~\left\{\bega{cl} \ov\Omega\quad&\hbox{if}~~~t\in [\tau, \tau'],\\[1mm]
\Omega'(t)\quad &\hbox{if}~~~t\in [\tau',T].\enda\right.$$
If $\ov\Omega\in \F_{\kappa}$ and $M\geq \kappa$, this    strategy is admissible. Its
cost is
\bel{com2}J(\Tilde\Omega) ~=~\kappa (\tau'-\tau)\caL^2(\ov\Omega)+ V(\tau',\ov\Omega).\eeq
Together, (\ref{com1})-(\ref{com2}) yield Lipschitz continuity of the value function w.r.t.~time.
\v
{\bf 2.}
To prove Lipschitz continuity w.r.t.~the initial data, assume
$d(\ov\Omega_1, \ov\Omega_2)=\delta$.   
By monotonicity, it suffices to estimate the difference
$V(\ov\Omega_1\cup\ov\Omega_2,\tau) - V(\ov\Omega_1,\tau)$.

\begin{figure}[ht]
\centerline{\hbox{\includegraphics[width=6cm]{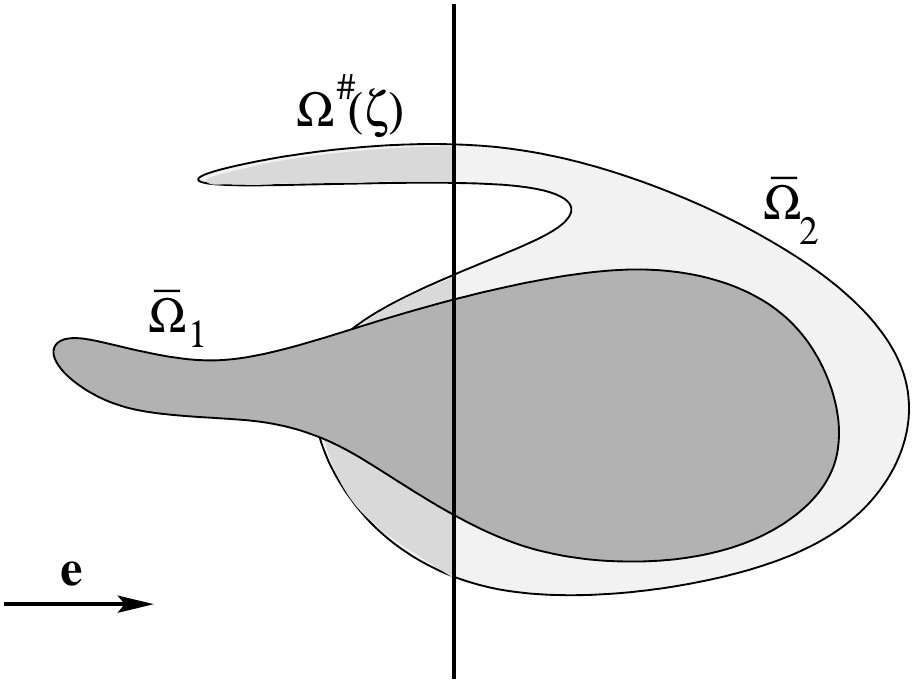}}}
\caption{\small The sets $\Omega^\sharp(\zeta)$ in (\ref{omsharp}), interpolating between $\ov\Omega_1$ and
$\ov\Omega_1\cup\ov\Omega_2$.}
\label{f:csm8}
\end{figure}

Fix a unit vector $\bfe\in\R^2$ and define the intermediate sets
\bel{omsharp}\Omega^\sharp(\zeta)~\doteq~\ov \Omega_1 \cup\Big\{ x\in \ov\Omega_2\,;~\langle \bfe, x\rangle \leq \zeta\Big\}.\eeq
(see Fig.~\ref{f:csm8}).  Observe that 
\bel{bome}\H^1\bigl(\partial \Omega^\sharp(\zeta)\bigr)~\leq~\H^1(\partial \ov\Omega_1\cup \partial \ov\Omega_2)~\leq~2\kappa.\eeq
Indeed, intersecting any set $S\subset\R^2$ with a half space, the length of its boundary
does not increase.

Define the increasing map $t\mapsto \zeta(t)$ implicitly by 
$$\caL^2\bigl(\Omega^\sharp(\zeta(t)\bigr)~=~\caL^2\bigl(\ov\Omega_1\bigr)+(\tau'-t)\,,
\qquad\qquad \tau\leq t\leq \tau'\doteq\tau + {\delta\over M-2\kappa}\,.$$
By (\ref{bome}) the map $t\mapsto \Omega^\sharp(\zeta(t))$ is admissible.
Moreover, 
$$\Omega^\sharp\bigl(\zeta(\tau)\bigr)~=~\ov\Omega_1\cup\ov\Omega_2\,,\qquad\qquad
\Omega^\sharp\bigl(\zeta(\tau')\bigr)~=~\ov\Omega_1\,.$$
This implies
$$\bega{l}
\ds V(\tau,\ov\Omega_2) ~\leq~V(\tau,\ov \Omega_1\cup\ov\Omega_2) 
~\leq~\kappa\int_\tau^{\tau'} \caL^2 \big(\Omega^\sharp(\zeta(t))\big)\, dt + V(\tau', \ov\Omega_1)-\caL^2(\ov\Omega) \\[4mm]
 \quad \ds ~\leq~(\tau'-\tau)\Big(\caL^2(\ov\Omega_1)+{\delta\over M-2\kappa}\Big) + V(\tau',\ov\Omega_1).
\enda$$
Since the difference $V(\tau', \ov\Omega_1)- V(\tau, \ov\Omega_1)$ was already estimated in step {\bf 1},
this achieves the proof.
\endproof

\section{A counterexample}
\label{sec:3}
\setcounter{equation}{0}

Next, we  construct a counterexample showing that,
if the assumption $M>2\kappa$ is dropped,
the value function can be discontinuous  w.r.t.~the 
distance (\ref{DFK}) on the initial data.
As usual, by $B(x,r)$ we denote the open ball centered at $x$ with radius $r>0$.
\begin{proposition}
Let $\kappa=2\pi $ and $M<\pi$. Then there exists 
a uniformly bounded sequence of initial data $\ov\Omega_n$   
such that
$$\H^1(\partial \ov\Omega_n)~=~2\pi,\qquad\qquad \caL^2(\ov \Omega_n)\,=\,
{\pi\over n^2}$$
for every $n\geq 1$. Moreover, for any admissible motion $t\mapsto \Omega_n(t)$ one has
$$\liminf_{n\to \infty} \caL^2(\Omega_n(T)\bigr)~\geq~{M^2\over 4\pi}
\qquad\hbox{for every}~~T>0.$$
\end{proposition}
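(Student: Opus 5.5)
The plan is to build $\ov\Omega_n$ as a fine ``dust'' of many tiny discs. This keeps the total perimeter fixed at $2\pi$, while the area tends to zero and the discs fill a fixed region more and more densely. Concretely, I take $n^2$ disjoint open discs of radius $1/n^2$, centred at the points of a uniform grid of mesh $1/n$ in the unit square $Q=[0,1]^2$. Then
$$\H^1(\partial\ov\Omega_n)=n^2\cdot 2\pi n^{-2}=2\pi,\qquad \caL^2(\ov\Omega_n)=n^2\cdot\pi n^{-4}=\pi/n^2,$$
and the family is uniformly bounded. (If $\kappa=2\pi$ in the definition of $\F_\kappa$ is meant as a perimeter bound, these sets satisfy it with equality.)

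The first ingredient is the basic area inequality from Section~\ref{s:1}. Since $E(\beta)\ge 1+\beta$, one has $-\beta\ge 1-E(\beta)$. Integrating over the boundary, every admissible motion satisfies
$$\frac{d}{dt}\caL^2\bigl(\Omega(t)\bigr)~\ge~\H^1\bigl(\partial\Omega(t)\bigr)-M .$$
In the finite-perimeter setting of Definition~\ref{d:13}, this is to be read in integrated form, via the divergence theorem in $[t_1,t_2]\times\R^2$ applied to ${\bf 1}_\Omega$. Combining it with the isoperimetric inequality $\H^1(\partial\Omega)\ge 2\sqrt{\pi\,\caL^2(\Omega)}$ gives the differential inequality
$$\dot A\ge 2\sqrt{\pi A}-M,\qquad A(t)=\caL^2(\Omega(t)).$$
Its right-hand side is nonnegative exactly when $A\ge M^2/(4\pi)$. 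Consequently, once $A(t_0)\ge M^2/(4\pi)$ at some time $t_0$, one has $A(t)\ge M^2/(4\pi)$ for all $t\ge t_0$. It therefore suffices to show that, for every fixed $T>0$ and every $\eta>0$, each admissible motion $\Omega_n(\cdot)$ reaches area at least $M^2/(4\pi)-\eta$ at some time $t_0\le T$, for all $n$ large.

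The second ingredient uses $M<\pi$ to show that the dust cannot be kept small. Most of the discs must receive little effort, and an uncontrolled disc of radius $r$ grows like the disc of radius $r+s$. With $n^2$ discs and total effort rate $M$, the discs that are actually pushed inward receive most of the budget. Since a disc of radius $r$ needs effort rate at least $2\pi r$ just not to grow, and since $M<\pi=\frac12\cdot 2\pi$, at least half of the discs (in perimeter, hence in number) are ``underfunded'' on any short time interval. I would make this quantitative through a localized version of the area inequality applied to each small cell of the grid. The effort spent inside a cell is $\int_{\partial\Omega\cap\text{cell}}E$, and these contributions sum to at most $M$. The conclusion to aim for is that, after a time $s_n=O(1/n)$, the sets $\Omega_n(s_n)$ must contain a $\sigma$-dense portion of $Q$ with area bounded below independently of $n$. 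Letting $n\to\infty$ with the grid spacing going to $0$, this area should be at least of order $\min\{1,\,c(\pi-M)\}$. After possibly rescaling the square (which does not affect the construction), I would check that this is at least $M^2/(4\pi)$. Once that threshold is reached, the ODE comparison of the previous paragraph propagates the bound up to time $T$.

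The main obstacle is this second step. Admissible motions may expand at arbitrary speed for free (since $E(\beta)=0$ for $\beta\le -1$), and they can also cut cheaply through thin pieces. So the lower bound must come only from the effort budget, not from any upper bound on velocities, and the per-cell localization must be done carefully for finite-perimeter space-time sets. My first attempt would apply Definition~\ref{d:13} to the restriction of $\Omega$ to cylinders $[t_1,t_2]\times(\text{cell})$. The aim is to show that a cell whose disc loses a fixed fraction of its area in time $s$ must consume effort comparable to the perimeter-time $2\pi n^{-2}\,s$. Summing over cells against the budget $Ms$ with $M<\pi$ then yields the required growth.
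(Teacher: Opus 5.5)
You have the paper's construction ($n^2$ discs of radius $n^{-2}$ on a grid, rescaled at the end; the paper uses mesh $R/4n$ with $R$ large). Your first ingredient, $\dot A\ge 2\sqrt{\pi A}-M$, which makes $\{A\ge M^2/4\pi\}$ forward invariant, is also the paper's Step 1. The gap is your second ingredient. It carries the whole content of the proposition, and you only state it as an aim.

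Moreover, the local mechanism you propose cannot work. A disc of radius $n^{-2}$ can be wiped out at the very start by putting all the effort on it. This takes time about $\pi n^{-4}/M$ and total effort about $\pi n^{-4}$, after which its cell is empty and costs nothing. That effort is smaller by a factor of order $n$ than the perimeter--time $2\pi n^{-2}s$ with $s=O(1/n)$. So no per-cell lower bound of that form holds, and summing such bounds against the budget $Ms$ proves nothing. Your ``more than half are underfunded'' count is likewise only an instantaneous statement at $t=0$. It ignores that discs can be eliminated one after another, freeing the budget.

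What is actually needed is a cumulative count on the time scale $n^{-2}$. The effort needed to remove a piece is at least its current area. Unattended discs grow like $\pi(n^{-2}+t)^2$, while the budget accrues only at rate $M$. So, heuristically, the number of discs that can ever be eliminated before the discs merge is at most $\int_0^\infty M\,dt/\bigl(\pi(n^{-2}+t)^2\bigr)=Mn^2/\pi$. This is where $M<\pi$ enters: it leaves at least $(1-M/\pi)n^2$ survivors.

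The paper makes this rigorous in two steps. First, it computes the greedy strategy that eliminates one disc at a time. It uses $\dot r=1-M/(2\pi r)$, the recursion $r_k=-\frac{M}{2\pi}\ln\bigl(1-\frac{2\pi}{M}r_{k-1}\bigr)$, and the comparison $r_k\ge 1/(n^2-\pi k/M)$. This shows that at time $\tau=R/9n$ at most $Mn^2/\pi$ discs are gone. Hence $A(\tau)\ge(1-M/\pi)\pi R^2/81$, which exceeds $M^2/4\pi$ for $R$ large, and your first ingredient then finishes the proof.

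Second, it shows that no admissible motion does better, by comparing $\dot A\ge\Phi(t,A)-M$ with $\dot A^*=\Phi(t,A^*)-M$. Here $\Phi(t,A)$ is the least total perimeter of $n^2$ sets of total area $A$, each of area at most $\pi(n^{-2}+t)^2$. This minimum is attained by full discs plus at most one partial disc.

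You need an argument of this kind, or a rigorous version of the cumulative effort count above, in place of the per-cell perimeter--time estimate.
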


{\bf Proof.} {\bf 1.} As a preliminary, observe that the area $A(t) = \caL^2\bigl(\Omega(t)\bigr)$ 
of an admissible moving set satisfies 
$${d\over dt} A(t)~\geq ~-M + \H^1\bigl(\partial \Omega(t)\bigr)~\geq~-M + 2\sqrt{ \pi A(t)},$$
where we used the isoperimetric inequality.  Therefore, if at some time $\tau$ we have
$A(\tau)~\geq~{M^2/ 4\pi}$, then 
$$A(t)\geq {M^2\over 4\pi}\qquad\forall t\geq \tau.$$
\v
{\bf 2.} We now consider an initial set $\ov\Omega_n$ consisting of the union of $n^2$ disjoint discs,
all with the same radius $r= n^{-2}$, centered at the points
$$P_{ij}~=~\Big( {iR\over 4n}, {jR\over 4n}\Big).$$
Notice that, if  the discs expand with unit speed in all directions, they will not touch each other until time $\tau={R\over 8n}-{2\over n^2}$.
\v
{\bf 3.} We now establish a lower bound on the area  $\caL^2\bigl(\Omega(\tau)\bigr)$.
Consider the following strategy:
On the initial time interval $[0,t_1]$
we concentrate all the effort on one disc until it is entirely wiped out.
On the second time interval $[t_1, t_2]$  we concentrate all the effort on a second disc, and so on.

We first compute the terminal area $\caL^2\bigl(\Omega(T)\bigr)$ determined by this strategy,
then we will show that this is actually the optimal one.

%
%
%
%

Call $r(t)$ the radius of a disc where all the effort is concentrated.  For $t\in [t_{k-1}, t_k]$,
this provides a solution to the Cauchy problem
$$\dot r~=~1-{M\over 2\pi r}\,,\qquad\qquad r(t_{k-1}) = {1\over n^2} + t_{k-1}\,.$$
Hence
$$r(t)-r(t_{k-1})-{M\over2\pi}\log\left({M-2\pi r(t_{k-1})\over M-2\pi r(t)}\right)~=~t-t_{k-1}\,.$$
In particular, this yields
\bel{tkdef}r(t_k)\,=\,0 \qquad\Longrightarrow \qquad t_k~=~t_{k-1}-r(t_{k-1})-{M\over2\pi}\log\left(1-{2\pi\over M}r(t_{k-1})\right).\eeq
\v
{\bf 4.}
Calling
$r_k~=~n^{-2} + t_k$, by (\ref{tkdef}) we have
\bel{stk}r_k~=~r_{k-1}+ (t_k-t_{k-1})~=\,- {M\over 2\pi} \ln\left( 1-{2\pi\over M} r_{k-1}\right).\eeq
We compare  the sequence \eqref{stk} with the solution of the Cauchy problem
$$
\begin{cases}
\dot{\zeta} = \dfrac{\pi}{M}\zeta^2, \\[6pt]
\zeta(0) = \dfrac{1}{n^2},
\end{cases}
$$
which is explicitly computed as
$$ 
\zeta(s) \,=\, \frac{1}{\,n^2- \tfrac{\pi}{M}s\,}.
$$
We observe that  the sequence $\zeta_k\doteq \zeta(k)$, $k=0,1,2,\ldots$, 
satisfies the inductive relations
$$\begin{cases}
\displaystyle \zeta_{k+1} = {\zeta_k \over 1-{\pi\over M}\zeta_k}, \\[8pt]
\zeta_0 = \dfrac{1}{n^2}\,.
\end{cases}$$
Moreover
$${x\over 1-{\pi\over M}x}~\leq \,  - {M\over 2\pi} \ln\left( 1-{2\pi\over M} x \right) \qquad \hbox{for }~
 0\leq x<{M\over 2\pi}\,.$$
A comparison with (\ref{stk}) thus yields
$$ \zeta(k)\leq r_k \qquad \hbox{ as long as}\quad r_k<{M\over 2\pi}\,.$$

At time $\tau= R/9n$, the  number $m$ of discs that have been eliminated satisfies 
$$\tau~\geq~{1\over n^2- {\pi\over M} m}\,,\qquad \qquad m ~\leq~\left(n^2-{1\over\tau}\right) \,{M\over \pi}
~=~\left( n- {9\over R}\right) {M\over \pi}\, n.$$
The number of discs that have NOT been eliminated is greater than 
$$\left[ n - \Big( n- {9\over R}\Big) {M\over \pi}\right] n~\geq~\left(1 - {M\over \pi}\right)n^2.$$

The total area of these discs is
$$A(\tau)~ \geq~\left(1 - {M\over \pi}\right)n^2 \cdot \pi  \left( {1\over n^2}+\tau\right)^2
~\geq~\left(1 - {M\over \pi}\right)n^2\cdot \pi  {R^2\over 81 n^2}=
\left(1 - {M\over \pi}\right)\cdot \pi  {R^2\over 81}\,.$$
As soon as 
$$\left(1 - {M\over \pi}\right)\cdot \pi  {R^2\over 81}
~>~{M^2\over 4\pi}, $$
by step {\bf 1} the total area cannot decrease.

\v
{\bf 5.}  It remains to check that the strategy of cleaning up one disc at a time is optimal.
This can be done by a comparison argument.  

Consider any admissible strategy:
$$\Omega(t)~=~\bigcup_{k=1}^{n^2} \Omega_k(t),$$
where the sets $\Omega_k(t)$ remain disjoint up to time $\tau={R\over 8n}-{2\over n^2}$.
Call $A_k(t)$, $P_k(t)$ respectively the area and the perimeter of $\Omega_k(t)$, and set
$$A(t)~=~\sum_k A_k(t)~=~ \caL^2\bigl(\Omega(t)\bigr),\qquad\qquad 
P(t)~=~\sum_k P_k(t)~=~ \H^1\bigl(\partial\Omega(t)\bigr).$$

Define the function $\Phi(t,A)$ to be the infimum of the following constrained isoperimetric problem:
$$\hbox{minimize:} ~~\sum_{k=1}^{n^2} \H^1(\partial U_k),$$
where the sets $U_1,\ldots, U_{n^2}$ satisfy the constraints
$$\hbox{subject to:}   ~~\sum_i \caL^2(U_k) = A,\qquad \caL^2(U_k)~\leq ~\pi\left({1\over n^2}+t\right)^2\quad\forall i=1,\ldots, n^2.$$
We observe that this minimum is obtained when:
\begi
\item every set $U_k$ is a disc (possibly empty),
\item all discs $U_k$, except at most of one of them, have radius either $r= n^{-2} +t$ or $r=0$.
\endi
Indeed, if two sets, say $U_1,U_2$ are discs with radii $0<r_1\leq r_2< n^{-2}+t$,
we can enlarge $U_2$ and shrink $U_1$ so that the sum of the areas remains constant.
In  this case, the sum of the perimeters will decrease.

We now have
\bel{DA1}{d\over dt} A(t)~=~P(t) -M~\geq~\Phi\bigl(t, A(t)\bigr) -M.\eeq
On the other hand, calling $A^*(t)$ and $P^*(t)$ respectively the area and the perimeter
of the set $\Omega(t)$ obtained by cleaning up one disc at a time, we have
\bel{DA2}{d\over dt} A^*(t)~=~P^*(t) -M~=~\Phi\bigl(t, A^*(t)\bigr) -M.\eeq
By comparing (\ref{DA1}) with (\ref{DA2}), since $\Phi$ is Lipschitz continuous, we conclude that 
$A(t)\geq A^*(t)$ for every $t\geq 0$.
\endproof

\section{Continuous dependence of the minimum slicing cost}
\label{sec:4}
\setcounter{equation}{0}
Given $r>0$ and a set $G$, we denote by $B(G,r)\doteq~\bigl\{ x\,;~d(x,G)<r\bigr\}$
the open neighborhood with radius $r$ around the set $G$.
We recall that the Hausdorff distance between two sets $G_1,G_2\subset\R^n$ is
$$\bega{rl} d_H(G_1, G_2)
&\doteq~\inf~\Big\{ r>0\,;~~G_1\subset B(G_2,r)~~\hbox{and}~~G_2\subset B(G_1,r)
\Big\}\\[4mm]
&=~\max\Big\{ \sup \bigl\{ d(x_1, G_2)\,;~x_1\in G_1\bigr\}~,~\sup\bigl\{d(x_2, G_1)\,;~x_2\in G_2\bigr\}\Big\}
.\enda$$

\begin{lemma}\label{l:41} 
If $G_1\subset G_2$ are bounded open sets with finite perimeter, then their slicing costs
satisfy $V(G_1)\leq V(G_2)$.
\end{lemma}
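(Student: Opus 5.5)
Given any slicing $t\mapsto\Omega_2(t)$ of $G_2$, $t\in[0,T_2]$ with $T_2=\caL^2(G_2)$, I will build a slicing of $G_1$ whose cost is no larger. The natural candidate is the restriction $\Omega_2(s)\cap G_1$, reparametrized by area. Set
$$\phi(s)~\doteq~\caL^2\bigl(\Omega_2(s)\cap G_1\bigr),\qquad s\in[0,T_2].$$
This function is nondecreasing, with $\phi(0)=0$ and $\phi(T_2)=\caL^2(G_1)=T_1$. It is also $1$-Lipschitz, since $\phi(s_2)-\phi(s_1)\leq\caL^2(\Omega_2(s_2)\setminus\Omega_2(s_1))=s_2-s_1$. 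Hence $\phi$ is absolutely continuous with $0\leq\phi'\leq1$ a.e. For $t\in[0,T_1]$ I define $\sigma(t)\doteq\min\{s\,;~\phi(s)=t\}$ and
$$\Omega_1(t)~\doteq~\Omega_2(\sigma(t))\cap G_1.$$
Since $\sigma$ is increasing, $\Omega_1$ inherits the monotonicity (\ref{Om1}). Continuity of $\phi$ gives $\caL^2(\Omega_1(t))=t$, which is (\ref{Om2}). So $\Omega_1$ is a slicing of $G_1$.

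Next I compare the costs. Since $G_1$ is open, $G_1\cap\partial(\Omega_2(s)\cap G_1)\subseteq G_1\cap\partial\Omega_2(s)\subseteq G_2\cap\partial\Omega_2(s)$. Consequently
$$\ell_1(s)~\doteq~\H^1\bigl(G_1\cap\partial(\Omega_2(s)\cap G_1)\bigr)~\leq~\H^1\bigl(G_2\cap\partial\Omega_2(s)\bigr)~\doteq~\ell_2(s).$$
I then change variables $t=\phi(s)$ via the area formula for the monotone absolutely continuous map $\phi$:
$$J(\Omega_1)~=~\int_0^{T_1}\ell_1(\sigma(t))\,dt~=~\int_{\{s=\sigma(\phi(s))\}}\ell_1(s)\,\phi'(s)\,ds~\leq~\int_0^{T_2}\ell_2(s)\,ds~=~J(\Omega_2),$$
using $\phi'\leq1$. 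Taking the infimum over all slicings $\Omega_2$ of $G_2$ then gives $V(G_1)\leq V(G_2)$.

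The main obstacle is the boundary inclusion. If $\partial$ means the topological boundary, it is elementary: a point of $G_1$ that is interior to $\Omega_2(s)$ or to its complement stays so after intersecting with the open set $G_1$. If the paper intends a measure-theoretic or reduced boundary, or identifies sets up to null sets, I would argue with the essential boundary instead. In that setting, intersecting with an open set $G_1$ does not change density points inside $G_1$, so the same inequality holds. A secondary technical point is the change of variables. On intervals where $\phi$ is constant, $\sigma$ jumps over them, and these $s$ have $\phi'=0$, so they contribute nothing. The formula $\int g(\phi(s))\phi'(s)\,ds=\int g(t)\,dt$ for monotone absolutely continuous $\phi$ and nonnegative measurable $g$ handles this. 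It also requires measurability of $\ell_1$, which I expect to follow from lower semicontinuity of perimeter under $\L^1$ convergence, or else the cost is taken as an upper integral and the inequality still holds.
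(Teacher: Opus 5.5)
Your proof is correct and takes the same route as the paper: restrict a slicing of $G_2$ to $G_1$, reparametrize by the area function $s\mapsto\caL^2(\Omega_2(s)\cap G_1)$ through a generalized inverse, and compare costs by a change of variables together with $G_1\cap\partial\Omega_2(s)\subseteq G_2\cap\partial\Omega_2(s)$. Your version is slightly more careful. Starting from an arbitrary slicing avoids assuming that an optimal one exists, and taking the smallest point of each level set avoids the paper's right inverse, which gives $\pi(T_1)=\inf\emptyset$. Your explicit bound $0\le\phi'\le 1$ and the boundary inclusion for open $G_1$ justify steps the paper uses without comment.
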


{\bf Proof.}  
Let $t\mapsto \Omega_2(t)$ be an optimal slicing of $G_2$. Set 
$$T_1= \caL^2(G_1),\qquad T_2= \caL^2(G_2).$$
Observe that the function $$s~\mapsto~\tau(s)~=~ \caL^2(\Omega_2(s)\cap G_1)$$
is nondecreasing.  Denote by
$$t~\mapsto~\pi(t)~\doteq ~\inf\big\{s : t<\tau(s)\big\} $$ the right inverse of $s\mapsto\tau(s)$, so that
$\tau\bigl(\pi(t)\bigr)=t$.
The strategy  
$$t~\mapsto~ \Omega_1(t)~=~\Omega_2(\pi(t))\cap G_1$$
describes a slicing of $G_1$: 
$$ \Omega_1(t_1)\subset  \Omega_1(t_2) \quad \text{ if }\, t_1<t_2,
\qquad \text{ and } \qquad \caL^2(\Omega_1(t))=t\,.$$
Moreover, its cost is lower  then the cost of $\Omega_2$. 
Indeed, applying the (possibly discontinuous) change of variable $s=\pi(t)$, we get
\begin{align*}
\int_0^{T_1}\H^1\bigl(\partial \Omega_1(t)\cap G_1\bigr)dt&=
\int_0^{T_1}\H^1\bigl(\partial \Omega_2(\pi(t))\cap G_1\bigr)dt
\leq\int_0^{\pi(T_1)}\H^1\bigl(\partial \Omega_2(s)\cap G_1\bigr)ds\\
&\leq\int_0^{T_2}\H^1\bigl(\partial \Omega_2(s)\cap G_1\bigr)ds\,.
\end{align*}
Since $G_1\subset G_2$,
$$\H^1\bigl(\partial \Omega_2(s)\cap G_1\bigr)~\leq~\H^1\bigl(\partial \Omega_2(s)\cap
G_2\bigr)\,,$$
we finally obtain that
$$V(G_1)\leq \int_0^{T_1}\H^1\bigl(\partial \Omega_1(t)\cap G_1\bigr)dt\leq
\int_0^{T_2}\H^1\bigl(\partial \Omega_2(t)\cap G_2\bigr)dt=V(G_2)\,.$$
\endproof

Our main result on the continuous dependence of the slicing cost is based on the Hausdorff distance among boundaries.  
Some regularity of the domains $G\subset \R^2$ needs to be imposed.
We recall here the main definition \cite{HMT}.  Here $G^c$ denotes the complement $\R^n\setminus G$ of the set $G$, while $\ov G$ denotes the closure.
\begin{definition}
Let $G$ be a nonempty, bounded open subset of $\R^n$.  We say that $G$ is a {\bf strongly Lipschitz domain}  if, for every $x_0\in \partial G$, there exist constants $b, c ,d> 0$ for which the following holds. There exist a hyperplane  $H$  through $x_0$, a  unit normal $\bfn$ to $H$, and an open cylinder 
$$\Gamma_{b,c} ~\doteq~ \Big\{x' + t \bfn\,;~~ x' \in H, ~|x' - x_0| < b,~~ |t| < c\Big\}$$ 
such that
\bel{cilinders}  \bega{rl} \Gamma_{b,c}\cap G&=~ \Gamma_{b,c}\cap\bigl\{x' + t\bfn\,;~~x'\in H,~t<\vp(x')\bigr\},\\[2mm]
\Gamma_{b,c}\cap \partial G&=~\Gamma_{b,c}\cap\bigl\{x' + t\bfn\,;~~x'\in H,~t=\vp(x')\bigr\},\\[2mm]
\Gamma_{b,c}\cap \ov G^{\,c}&=~ \Gamma_{b,c}\cap\bigl\{x' + t\bfn\,;~~x'\in H,~t>\vp(x')\bigr\},
\enda \eeq
for some Lipschitz function $\vp:H\mapsto \R$ satisfying
\bel{vpprop} \vp(x_0)\,=\,0\qquad \hbox{and}\qquad \bigl|\vp(x')\bigr|< d~~\hbox{if}~~|x'-x_0|\leq b.\eeq
\end{definition}

A closely related property is the following.
Denote by $\langle\cdot,\cdot\rangle$ the Euclidean inner product on $\R^n$.
We say that a bounded open set $G\subset\R^n$ satisfies the {\bf uniform cone property} if there exists an open truncated 
cone  of the form
$$C\,\doteq\,\Big\{ x\in\R^2\,;~~~c_0 \langle \bfe, x\rangle<|x|<\rho\Big\}$$ 
for some $c_0, \rho>0$ and some unit vector $\bfe$,
such that the following holds. 
For every $x_0\in \partial\Omega$
there exists $r>0$ and a rotation ${\cal R}$  such that 
$$x+{\cal R}(C)\,\subset \,G\qquad\forall \quad x\in B(x_0, r)\cap \ov G.$$
As proved in Theorem 1.2.2.2  of \cite{G}, one has
\begin{lemma} 
A bounded open set  is strongly Lipschitz if and only if it satisfies the uniform cone property.
\end{lemma}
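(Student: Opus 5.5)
The plan is to prove both implications directly, working locally near each boundary point and then using compactness of $\partial G$ to make the constants uniform. Since the statement is the classical equivalence of Theorem 1.2.2.2 in \cite{G}, I will follow that route.

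\emph{Strongly Lipschitz $\implies$ uniform cone.} Fix $x_0\in\partial G$ and take the cylinder $\Gamma_{b,c}$, the normal $\bfn$ and the Lipschitz graph $\vp$ with constant $L$. The candidate cone has axis $-\bfn$ and half-opening angle $\theta$ with $\cot\theta>L$, truncated at a height $\rho$ small relative to $b$ and $c-d$. For $x=x'+t\bfn$ with $t\le\vp(x')$, a point $x+y$ of the cone satisfies $x+y=x''+s\bfn$ with $s-t<-L|x''-x'|$. The Lipschitz bound then gives $s<\vp(x')-L|x''-x'|\le\vp(x'')$, so $x+y\in G$ by (\ref{cilinders}). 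We need this to hold for all $x\in B(x_0,r)\cap\ov G$, which requires $r$ small enough that $x$ plus the truncated cone stays inside $\Gamma_{b,c}$. Because the cones obtained at different points differ only by rotations and by the parameters $(L,\rho)$, a finite subcover of the compact set $\partial G$ gives one common cone $C$, with $c_0$ and $\rho$ taken as the worst values over the cover.

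\emph{Uniform cone $\implies$ strongly Lipschitz.} This is the harder direction. Fix $x_0\in\partial G$ with its rotation ${\cal R}$, and let $\bfn=-{\cal R}\bfe$ (the cone points into $G$ along ${\cal R}\bfe$) and let $H$ be the hyperplane through $x_0$ orthogonal to $\bfn$. Working in the cylinder $\Gamma_{b,c}$ with $b,c\ll\min\{r,\rho\}$, I would proceed in three steps.
\begin{enumerate}
\item Show that each vertical line $\{x'+t\bfn\}$ with $|x'-x_0|<b$ meets $\partial G$ in exactly one point. Existence follows because the line passes from $G$ (near $x_0$, using the cone at $x_0$) to points outside $\ov G$. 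For uniqueness, if $x=x'+t\bfn\in\ov G$ then the cone at $x$ contains $x'+s\bfn$ for all $s<t$ with $|s-t|<\rho$, so those points lie in $G$ and not on $\partial G$. This gives a well-defined function $\vp(x')=\sup\{t;~x'+t\bfn\in\ov G\}$.
\item Show that $\vp$ is Lipschitz. The cone at $x'+\vp(x')\bfn$ contains the downward cone of slope $L=\cot\theta$, where $\theta$ is the cone's half-opening angle, so $\cot\theta=c_0/\sqrt{1-c_0^2}$ in terms of the parameters of $C$. This forces $\vp(x'')\ge\vp(x')-L|x''-x'|$ for all nearby $x''$; exchanging the roles of $x'$ and $x''$ gives the reverse inequality.
\item Derive the three identities (\ref{cilinders}) from the monotonicity along vertical lines established in the first step, and choose $c$ and $d$ so that (\ref{vpprop}) holds, using continuity of $\vp$ and $\vp(x_0)=0$.
\end{enumerate}

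The main obstacle is the first of these steps: the downward cone only guarantees points \emph{below} $\ov G$-points are interior, and I must also exclude points above the graph belonging to $G$. The argument is by contradiction. Take $y=x'+s\bfn\in G$ with $s>\vp(x')$. Then $y$ lies within distance $r$ of $x_0$ and belongs to $\ov G$, so the whole downward cone at $y$ lies in $G$. That cone contains $x'+\vp(x')\bfn\in\partial G$, which is impossible since points of $G$ cannot lie on $\partial G$. Making this rigorous requires the quantitative choice $b,c<\rho\sin\theta/2$, so that the relevant points sit inside $B(x_0,r)$ and within the truncation radius $\rho$.
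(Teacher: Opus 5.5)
Your proof is correct and is essentially the standard argument behind Grisvard's Theorem 1.2.2.2, which the paper only cites: a cone of slope $\cot\theta\ge L$ below the Lipschitz graph, with compactness giving a common $C$, and conversely the downward cone along vertical lines to build and control the graph. One quantitative condition is missing in the converse: besides $b,c<\rho\sin\theta/2$ you need $b\cot\theta<c$, because only then does every vertical line $\{x'+t\bfn\}$ with $|x'-x_0|<b$ meet the cone at $x_0$ for some $t\in\,]-c,0[$ (giving points of $G$), and also leave $\ov G$ for $t\in\,]b\cot\theta,c[$ (otherwise $x_0$ would lie in the downward cone of such a point, hence in $G$), which is exactly the existence claim in your first step.
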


\begin{figure}[ht]
\centerline{\hbox{\includegraphics[width=12cm]{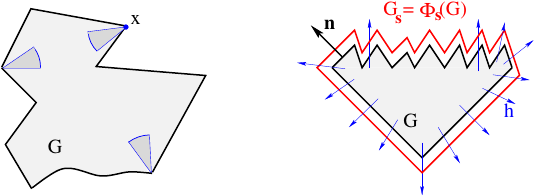}}}
\caption{\small Left: a strongly Lipschitz domain $G$, satisfying the
uniform cone property. Right: a vector field $h$, transversal to the boundary of $G$ at every point $x\in \partial G$.   For every $s>0$ small enough,  
the image of the set $G$ under the map $x\mapsto \Phi_s(x)=x+ s h(x)$ covers a whole neighborhood of $G$.}
\label{f:csm20}
\end{figure}

An important property of Lipschitz domains is that they admit smooth  vector fields which are transversal to their boundary.
By Proposition~2.3 and Theorem~2.7 of \cite{HMT}, one has
\begin{lemma} Let the bounded open set $G\subset\R^n$ be a strongly Lipschitz domain.
Then there exists $\kappa>0$ and a vector field $h\in \C^\infty(\R^n;\R^n)$ such that
\bel{transv} \la \bfn(x),\, h(x)\ra\,\geq\,\kappa\qquad\hbox{for a.e.} ~x\in\partial G.\eeq
Here $\bfn (x)$ denotes the unit outer normal to the boundary $\partial G$ at the point $x$, while
``almost everywhere" refers to the $(n-1)$-dimensional Hausdorff measure on
$\partial G$.
\end{lemma}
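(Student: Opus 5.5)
The plan is to build $h$ from the local graph descriptions in the definition of a strongly Lipschitz domain, glued with a partition of unity. The key observation is that in each cylinder the outer normal makes a uniformly acute angle with the fixed direction $\bfn$. This holds because the boundary is a Lipschitz graph.

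First, the local step. Fix $x_0\in\partial G$ and take the cylinder $\Gamma_{b,c}$, hyperplane $H$, unit normal $\bfn$ and Lipschitz function $\vp$ from (\ref{cilinders}). Let $L$ be the Lipschitz constant of $\vp$. By Rademacher's theorem $\vp$ is differentiable $\H^{n-1}$-a.e. on $H$, and the graph map is bi-Lipschitz, so this gives differentiability $\H^{n-1}$-a.e. on $\Gamma_{b,c}\cap\partial G$. At such points $G$ lies below the graph, so the unit outer normal is
$$\bfn(x)~=~{\bfn-\nabla\vp(x')\over\sqrt{1+|\nabla\vp(x')|^2}}\,,$$
where $\nabla\vp\in H$ is orthogonal to $\bfn$. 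Hence
$$\la\bfn(x),\bfn\ra~=~(1+|\nabla\vp|^2)^{-1/2}~\geq~(1+L^2)^{-1/2}.$$
The constant vector field $\bfn$ is therefore uniformly transversal on $\Gamma_{b,c}\cap\partial G$.

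Second, the globalization. Since $\partial G$ is compact, I cover it by finitely many cylinders $\Gamma^{(1)},\dots,\Gamma^{(N)}$, each with its own normal $\bfn_i$ and constant $L_i$. I choose a smooth partition of unity $\psi_1,\dots,\psi_N\in\C^\infty_c(\R^n)$ with ${\rm supp}\,\psi_i\subset\Gamma^{(i)}$, $\psi_i\geq0$ and $\sum_i\psi_i=1$ on a neighborhood of $\partial G$. Then I set
$$h(x)~=~\sum_{i=1}^N\psi_i(x)\,\bfn_i\,,$$
which lies in $\C^\infty(\R^n;\R^n)$. At any point $x\in\partial G$ where the outer normal exists, the local step gives $\la\bfn(x),\bfn_i\ra\geq(1+L_i^2)^{-1/2}$ for every $i$ with $\psi_i(x)>0$. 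The exceptional null sets are finitely many, so their union is still null. Taking the convex combination,
$$\la\bfn(x),h(x)\ra~\geq~\min_i(1+L_i^2)^{-1/2}~\doteq~\kappa~>~0,$$
which is (\ref{transv}).

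The main obstacle is consistency: I must check that the outer normal $\bfn(x)$ computed from different overlapping cylinders is the same vector. I would argue that $\bfn(x)$ is intrinsic, namely the measure-theoretic outer normal to the finite perimeter set $G$, defined $\H^{n-1}$-a.e. on $\partial G$. In each cylinder this normal coincides with the graph formula above. This uses the fact that $\partial G=\partial^*G$ up to null sets for Lipschitz graphs, since the blow-up at a differentiability point of $\vp$ is a half-space.

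The second point to verify is that the cylinders, which are open sets containing $\partial G$, admit a partition of unity summing to $1$ near $\partial G$. This is standard for a finite open cover of a compact set.
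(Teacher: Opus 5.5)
Your argument is correct. The paper gives no proof of its own here; it simply quotes Proposition~2.3 and Theorem~2.7 of [HMT].

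What you supply is the standard self-contained argument for this fact. In each chart the constant field $\bfn_i$ satisfies $\langle \bfn(x),\bfn_i\rangle=(1+|\nabla\vp_i|^2)^{-1/2}\geq(1+L_i^2)^{-1/2}$ wherever $\vp_i$ is differentiable. A partition of unity subordinate to a finite subcover of the compact set $\partial G$ then turns these local bounds into the convex combination $h=\sum_i\psi_i\bfn_i$, with $\kappa=\min_i(1+L_i^2)^{-1/2}$. Compared with the citation, you get a self-contained proof, and $\kappa$ is explicit in terms of finitely many Lipschitz constants.

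Your treatment of the consistency of normals is also sound, but one step should be stated more carefully. A half-space blow-up at a differentiability point only places $x$ in the essential boundary, not directly in $\partial^*G$. Federer's theorem then gives that $\H^{n-1}$-a.e.\ such point lies in $\partial^*G$. At those points the measure-theoretic outer normal is determined by the same blow-up, so it coincides with the graph normal in every chart containing $x$. Discarding finitely many $\H^{n-1}$-null sets is therefore enough.
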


For the application we have in mind,  a  transversal vector field can be used 
to construct a smooth homotopy
of $G$ onto a strictly larger or a strictly smaller domain.    For $s\in \R$, define
the map $\Phi_s:\R^n\mapsto\R^n$ by setting
\bel{Fs}
\Phi_s(x)~\doteq~x+s h(x).\eeq
Moreover, consider the images
\bel{Gs}
G_s~\doteq~\bigl\{ x+ s h(x)\,;~~x\in G\bigr\}.\eeq

\begin{lemma}\label{l:44} Let $G\subset\R^n$ be a strongly Lipschitz domain and let $h\in \C^1(\R^n;\R^n)$ 
be a  transversal vector field, as in (\ref{transv}).
Then, for every $\ve>0$  there exists $s_0>0$ such that the following holds.

For every $s\in \,[- s_0, s_0]$ the $\C^1$ map $x\mapsto \Phi_s(x)$ has  a $\C^1$ inverse $\Phi_s^{-1}$.
Denoting by $I$ the identity map, one has
\bel{F1} \bigl\|\Phi_s-I\bigr\|_{\C^1}\,<\,\ve,\qquad\qquad \bigl\|\Phi^{-1}_s-I\bigr\|_{\C^1}\,<\,\ve.\eeq
\bel{det}\Big| \det\bigl(D\Phi_s(x)\bigr) - 1\Big|\,<\,\ve,\qquad \Big| \det\bigl(D(\Phi_s)^{-1}(x)\bigr) - 1\Big|~<~\ve\qquad\quad \forall ~x\in \R^2.\eeq
Moreover we have the implications
\bel{GGs} s<0\quad\implies\quad \ov G_s\subset G,\qquad\qquad \qquad 
s>0\quad\implies\quad \ov G\,\subset\,G_s\,.\eeq
\end{lemma}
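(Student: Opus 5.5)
The plan is to treat the analytic part (\ref{F1})--(\ref{det}) by a perturbation argument and the geometric part (\ref{GGs}) by the local graph representation (\ref{cilinders}). First I make $h$ globally bounded. Multiply $h$ by a smooth cutoff equal to $1$ on a neighborhood of $\ov G$. This changes neither (\ref{transv}) nor $G_s$ near $\ov G$, and after it $\|h\|_{\C^1}<\infty$. Then $\Phi_s-I=sh$, so $\|\Phi_s-I\|_{\C^1}\le |s|\,\|h\|_{\C^1}<\ve$ for $|s|$ small.

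Next I invert $\Phi_s$. If $|s|\,\|Dh\|_\infty\le 1/2$, then for each $y$ the map $x\mapsto y-sh(x)$ is a contraction. Hence $\Phi_s$ is a bijection of $\R^n$. By the inverse function theorem the inverse is $\C^1$, with $D\Phi_s^{-1}=(I+sDh)^{-1}\circ\Phi_s^{-1}$. A Neumann series bound gives $\|D\Phi_s^{-1}-I\|\le 2|s|\,\|Dh\|$. Also $|\Phi_s^{-1}(y)-y|=|s|\,|h(\Phi_s^{-1}(y))|\le |s|\,\|h\|_\infty$. The determinant estimates (\ref{det}) then follow from continuity of $\det$ near $I$. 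This yields (\ref{F1})--(\ref{det}) for $|s|\le s_0$.

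The main step is the following uniform ``outward pointing'' claim. There exist $\sigma_0,\eta>0$ such that
$$y\in\ov G,\quad 0<\sigma<\sigma_0,\quad |v-h(x)|<\eta~\hbox{ for some }x\hbox{ with }|x-y|<\eta\quad\implies\quad y-\sigma v\in G.$$
Points of $\ov G$ at distance $\ge\delta$ from $\partial G$ are trivial, since $\sigma|v|$ is small. Near $x_0\in\partial G$ I use the cylinder $\Gamma_{b,c}$. There, a.e. on $\partial G$ the outer normal is $\bfn=(-\nabla\vp,1)/\sqrt{1+|\nabla\vp|^2}$, with $|\nabla\vp|\le L$. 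Combining (\ref{transv}) with the continuity of $h$, I expect, after shrinking $b$ and $\eta$,
$$v_n-\la\nabla\vp(x'),v'\ra\,\ge\,\kappa/2\qquad\hbox{for a.e. } |x'-x_0|<b,$$
for every admissible $v=(v',v_n)$.

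The obstacle is to integrate this a.e. inequality along segments. I would mollify: $\vp_\epsilon=\vp*\rho_\epsilon$ has $\nabla\vp_\epsilon$ equal to an average of $\nabla\vp$. The inequality is affine in $\nabla\vp$, so it persists for $\vp_\epsilon$ pointwise on a slightly smaller ball. Take $y=(y',t)$ with $t\le\vp(y')$. Integrating along the segment and letting $\epsilon\to0$ gives
$$\vp(y'-\sigma v')-(t-\sigma v_n)~\ge~\sigma\int_0^1\bigl(v_n-\la\nabla\vp_\epsilon,v'\ra\bigr)\,d\theta-o(1)~\ge~\sigma\kappa/2>0 .$$
This keeps $y-\sigma v$ in the cylinder for small $\sigma$, and therefore in $G$ by (\ref{cilinders}). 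Compactness of $\partial G$ makes $\sigma_0$ and $\eta$ uniform.

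Finally I derive (\ref{GGs}). For $s>0$ and $y\in\ov G$, set $x=\Phi_s^{-1}(y)=y-sh(x)$. Here $|x-y|\le s\|h\|_\infty<\eta$, so the claim with $v=h(x)$, $\sigma=s$ gives $x\in G$, hence $y\in G_s$. For $s<0$, every point of $\ov{G_s}=\Phi_s(\ov G)$ has the form $x-|s|h(x)$ with $x\in\ov G$. The claim with $y=x$ and $v=h(x)$ puts it in $G$. Shrinking $s_0$ to satisfy all smallness requirements completes the argument.
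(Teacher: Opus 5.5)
\textbf{Verdict: correct.} Your argument is sound. For (\ref{F1})--(\ref{det}) it follows the paper's idea; for (\ref{GGs}) you prove directly what the paper only cites.

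\textbf{Part (\ref{F1})--(\ref{det}).} This is the paper's argument: an inverse/implicit function theorem, valid once $h$ has bounded $\C^1$ norm. Your cutoff changes $\Phi_s$ away from $\ov G$, and this is legitimate for two reasons. First, $G_s$ depends only on $h$ restricted to $G$, so it is unchanged. Second, without a global $\C^1$ bound on $h$ one has $\|\Phi_s-I\|_{\C^1}=\infty$ for every $s\neq 0$. The paper's remark ``for any vector field whose $\C^1$ norm is bounded'' expresses the same caveat. Your contraction argument also gives global bijectivity of $\Phi_s$ on $\R^n$, which the local inverse function theorem alone does not.

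\textbf{Part (\ref{GGs}).} Here the routes genuinely differ. The paper simply refers to Proposition~4.19 of \cite{HMT}; you give a self-contained proof in four steps.
\begin{itemize}
\item You reduce both inclusions to one uniform statement: $y-\sigma v\in G$ for $y\in\ov G$ and $v$ close to $h$ near $y$. This flexibility is exactly what the case $s>0$ needs, where $v=h(\Phi_s^{-1}(y))$.
\item In each cylinder you turn (\ref{transv}) into the a.e. inequality $v_n-\la\nabla\vp,v'\ra\ge\kappa/2$. This uses $\sqrt{1+|\nabla\vp|^2}\ge 1$, the bound $|\nabla\vp|\le L$, and uniform continuity of $h$.
\item You carry this a.e. inequality over to every segment by mollifying $\vp$. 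Because the inequality is affine in $\nabla\vp$, it survives averaging against a probability kernel.
\item You conclude by compactness of $\partial G$, with the interior points handled trivially.
\end{itemize}
The citation buys brevity. Your route is elementary and shows exactly which ingredients are used: a.e.\ transversality, the local Lipschitz graph structure, and uniform continuity of $h$.

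\textbf{Minor bookkeeping point.} In your displayed estimate, keep the term $\vp_\epsilon(y')-t$ instead of replacing it by $-o(1)$. The precise inequality is $\vp_\epsilon(y'-\sigma v')-(t-\sigma v_n)\ge \bigl(\vp_\epsilon(y')-\vp(y')\bigr)+\sigma\kappa/2$, using $t\le\vp(y')$. The bracket tends to $0$ uniformly as $\epsilon\to 0$, so in the limit you get $\vp(y'-\sigma v')-(t-\sigma v_n)\ge\sigma\kappa/2>0$, as claimed.
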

{\bf Proof.}
The bounds (\ref{F1}) are an easy consequence of the implicit function theorem, and 
remain valid
for any vector field whose $\C^1$ norm is bounded.
For  a proof of the last statement we refer to
 Proposition~4.19 in \cite{HMT}.\endproof

We are now ready to prove the main result of this section, on the continuous dependence of the 
slicing cost within the family of strongly Lipschitz domains.  We shall use the notation
\bel{G-r}B(G, -r)~\doteq~\bigl\{ x\,;~B(x,r)\subseteq G\bigr\}.\eeq

\begin{theorem}\label{t:41} Let  $G$ be a strongly Lipschitz domain.
For any $\ve>0$ there exists $r>0$ such that, if $G'$ is another bounded open set such that
\bel{near} B(G, -r)~\subseteq ~G'~\subset~B(G,r),\eeq
then the minimum slicing costs satisfy
\bel{JGG} \bigl| V(G') - V(G)\bigr|~\leq~\ve.\eeq
\end{theorem}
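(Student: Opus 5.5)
The plan is to trap $G'$ between two diffeomorphic images of $G$ obtained from the transversal vector field, and then combine the monotonicity of Lemma~\ref{l:41} with a direct comparison of slicing costs under a near-identity diffeomorphism. Fix $\ve>0$ and choose $\eta>0$ small, to be related to $\ve$ at the end. Take $h$ as in (\ref{transv}) and apply Lemma~\ref{l:44} with $\eta$ in place of $\ve$, obtaining $s_0>0$; fix $s=s_0$. By (\ref{GGs}), $\ov G_{-s}$ is a compact subset of the open set $G$, and $\ov G$ is a compact subset of the open set $G_s$. Hence there is $r>0$ with
$$G_{-s}\subseteq B(G,-r),\qquad\qquad B(G,r)\subseteq G_s .$$
For this $r$, the hypothesis (\ref{near}) yields $G_{-s}\subseteq G'\subseteq G_s$. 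Lemma~\ref{l:41} then gives
$$V(G_{-s})\leq V(G')\leq V(G_s).$$
(If $G'$ is not assumed to have finite perimeter, I would note that the proof of Lemma~\ref{l:41} only uses the slicing of the larger set, so the same inequalities still hold.)

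The core step is a transport estimate. If $\Phi$ is a $\C^1$ diffeomorphism with $\|\Phi-I\|_{\C^1}<\eta$ and $|\det D\Phi-1|<\eta$, and $U$ is a bounded open set, I claim
$$V(\Phi(U))\leq(1+\eta)^2\,V(U).$$
Take a slicing $t\mapsto\Omega(t)$ of $U$ with cost within $\delta$ of $V(U)$. The sets $\Phi(\Omega(t))$ are nested, so $\theta(t)\doteq\caL^2(\Phi(\Omega(t)))$ is increasing. By the change of variables formula, $\theta(t_2)-\theta(t_1)\in[(1-\eta)(t_2-t_1),(1+\eta)(t_2-t_1)]$, so $\theta$ is bi-Lipschitz onto $[0,\caL^2(\Phi(U))]$. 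Define $\Tilde\Omega(\theta(t))\doteq\Phi(\Omega(t))$; this is a slicing of $\Phi(U)$. Since $\Phi$ is a homeomorphism, $\partial\Phi(\Omega)\cap\Phi(U)=\Phi(\partial\Omega\cap U)$, and $\mathrm{Lip}(\Phi)\leq1+\eta$ gives $\H^1$ of the relative boundary at most $(1+\eta)$ times the old one. Changing variables $t'=\theta(t)$ with $dt'\leq(1+\eta)dt$ then yields the claim, after letting $\delta\to0$.

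Applying this with $U=G$, $\Phi=\Phi_s$ gives $V(G_s)\leq(1+\eta)^2V(G)$. Applying it with $U=G_{-s}$, $\Phi=\Phi_{-s}^{-1}$ (whose $\C^1$ and Jacobian bounds also come from (\ref{F1})--(\ref{det})) gives $V(G)\leq(1+\eta)^2V(G_{-s})$. Combining with the sandwich,
$$(1+\eta)^{-2}V(G)-V(G)\leq V(G')-V(G)\leq\bigl((1+\eta)^2-1\bigr)V(G).$$
It remains to know $V(G)<\infty$. This follows, for instance, by slicing with half-planes $\{\langle\bfe,x\rangle\leq\zeta\}$, whose relative boundary length is bounded by $\mathrm{diam}\,G$. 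Choosing $\eta$ with $\bigl((1+\eta)^2-1\bigr)V(G)\leq\ve$ then proves (\ref{JGG}).

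I expect the main obstacle to be the measure-theoretic part of the transport step. One must check that the reparametrized family is a genuine slicing at every $t'$: since $\theta$ is a continuous increasing bijection, this holds. One must also check that the relative boundary transforms correctly and that the change of variables in the time integral is legitimate with merely measurable integrands. Bi-Lipschitz continuity of $\theta$ should make the latter routine. A secondary point is making sure that the $r$ produced from (\ref{GGs}) depends only on $G$ and $\ve$, which holds by construction.
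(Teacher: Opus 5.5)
Your proposal is correct and follows essentially the same route as the paper. The paper also uses compactness and (\ref{GGs}) to trap $G_{-s}\subseteq G'\subseteq G_s$, applies Lemma~\ref{l:41} for monotonicity, and pushes a slicing forward by $\Phi_s$ and $(\Phi_{-s})^{-1}$ with a bi-Lipschitz time reparametrization to get the factor $(1+\eta)^2$. Your use of near-optimal rather than optimal slicings, your explicit check that $V(G)<\infty$, and your remark that Lemma~\ref{l:41} does not need finite perimeter of $G'$ are refinements of points the paper leaves implicit.
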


{\bf Proof.}  
{\bf 1.} Since $G$ is strongly Lipschitz, there exists  a transversal  vector field
 $h\in \C^1(\R^2;\,\R^2)$.  
Denote by $\Phi_s$ the corresponding maps in (\ref{Fs}), and by $G_s$ the sets in 
(\ref{Gs}). 

By Lemma~\ref{l:44} we can choose $s_0>0$ small enough so that for  $s\in \,] -s_0, s_0]$ 
the bounds (\ref{F1})-(\ref{GGs}) hold.
\v
{\bf 2.}
Let  $t\mapsto \Omega(t)$, $t\in [0,T]$  be an 
optimal slicing of the set $G$. Consider the images
$$\Omega_s(t)~\doteq~\Phi_s\bigl(\Omega(t)\bigr),\qquad\qquad t\in [0,T].$$
Since $\Phi_s$ is a $\C^1$ diffeomorphism, 
by (\ref{F1}) the lengths of the boundaries satisfy
\bel{Fbo} \H^1\bigl(G_s\cap \partial \Omega_s(t)\bigr) ~\leq~\bigl\|\Phi_s\bigr\|_{\C^1} \cdot 
 \H^1\bigl(G\cap \partial \Omega(t)\bigr)~\leq~(1+\ve)\, \H^1\bigl(G\cap \partial \Omega(t)\bigr).
 \eeq
%
%
In addition, for any $0<t_1<t_2<T$,  by the first inequality in (\ref{det})  it follows
\bel{55}\Big|\caL^2\bigl(\Omega_s(t_2) \setminus \Omega_s(t_1)\bigr)- \caL^2\bigl(\Omega(t_2) \setminus \Omega(t_1)\bigr)\Big|~\leq~\ve \caL^2\bigl(\Omega(t_2) \setminus \Omega(t_1)\bigr)~=~
\ve(t_2-t_1).\eeq
\v
{\bf 3.}
To achieve a slicing of $G_s$, we need to perform a time rescaling.
Calling $T_s = \caL^2(G_s)$, define the map $\phi_s:[0,T]\mapsto [0, T_s]$ by setting
$$\phi_s(t)  ~\doteq~\caL^2\bigl(\Omega_s(t)\bigr).$$
By (\ref{55}) this map is Lipschitz continuous, strictly increasing, together with its inverse $\phi_s^{-1}$.   By the second inequality in (\ref{det}),
\bel{phin}\left| {d\over d\tau} \phi_s^{-1}(\tau)-1\right|\,<\,\ve\qquad\qquad\hbox{for a.e.}~\tau\in [0,T].\eeq
\v
{\bf 4.} By the previous construction, the map
$$\tau~\mapsto~\Hat \Omega(\tau)~\doteq~\Omega_s\bigl(\phi_s^{-1}(\tau)\bigr)$$
provides a slicing of the set $G_s$.
Choosing $s_0$ sufficiently small, for  any $s\in [0, s_0]$ 
by  (\ref{Fbo}) and  (\ref{phin}) we thus obtain
\bel{56}
\bega{rl} J(\Hat\Omega)&\ds=~\int_0^{T_s} \H^1\bigl( G_s\cap \partial\Hat\Omega(\tau)\bigr)\, d\tau
~=~\int_0^{T} \H^1\bigl( G_s\cap \partial\Hat\Omega(\phi^{-1}_s(t))\bigr)\, {d\tau\over dt} \, dt
\\[4mm]
&\ds\leq~\int_0^{T} (1+\ve)\H^1\bigl( G\cap \partial\Omega(t)\bigr)\cdot (1+\ve) \, dt
~\leq~(1+\ve)^2 V(G).
\enda
\eeq
This implies $V(G_s)\leq (1+\ve)^2 V(G)$.

An entirely similar analysis applies to the map $(\Phi_{-s})^{-1}: G_{-s}\mapsto G$.
For $s\in [-s_0, 0]$, this yields $V(G)\leq (1+\ve)^2 V(G_{-s})$.
\v
{\bf 5.} Now choose any $s\in \,]0, s_0]$.  Since the sets $G, G_s, G_{-s}$ are open, by (\ref{GGs}), a compactness argument implies that there exists a radius $r>0$ such that
$$B(G_{-s}, r)\subset G,\qquad\quad B(G,r)\subset G_s\,.$$
Therefore,  (\ref{near}) implies
$G_{-s}\subset G'\subset G_s$.  Hence, by Lemma~\ref{l:41},
\bel{VG'}
{1\over (1+\ve)^2} V(G)~\leq~V(G_{-s})~\leq~V(G')~\leq~V(G_s)~\leq~(1+\ve)^2 V(G).\eeq
Since $\ve>0$ was arbitrary, this proves the theorem.
\endproof

\begin{remark} {\rm Consider again the minimum time problem {\bf (MTP)},
where the control effort satisfies the constraint 
$$ \E(t)~\doteq~
\int_{\partial\Omega(t)\cap G}E\bigl(\beta(t,x)\bigr)\, \H^1(dx)~\leq~M\qquad\qquad\forall t>0.$$
By an entirely similar analysis one can establish the continuity of the eradication
time, depending on the domain $G$.}
\end{remark}

\begin{figure}[ht]
\centerline{\hbox{\includegraphics[width=8cm]{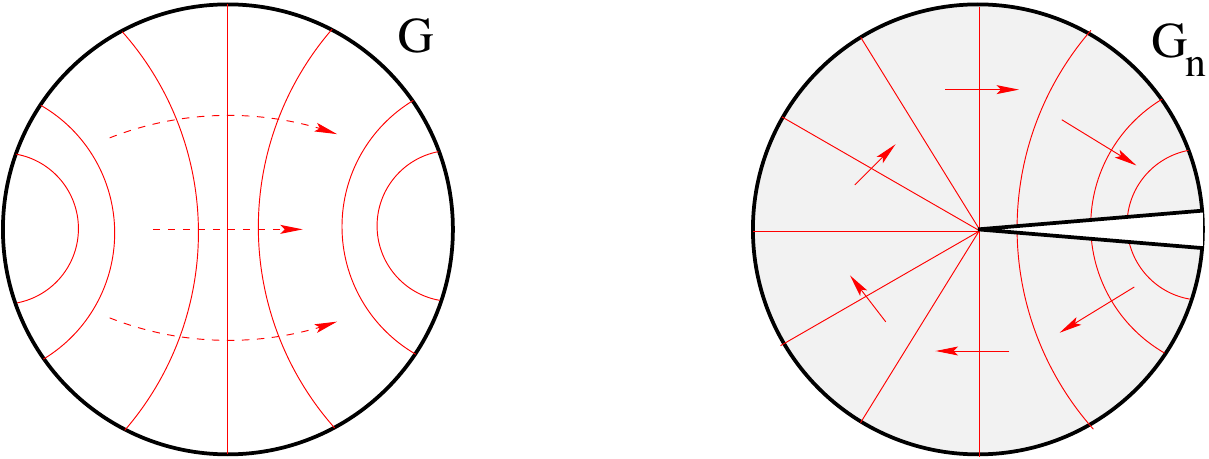}}}
\caption{\small  For the two sets $G, G_n$, the distance 
$\bigl\| {\bf 1}_{G_n}- {\bf 1}_G\bigr\|_{\L^1}$  between the
 characteristic functions is small, but the 
Hausdorff distance between the boundaries  is large.   The slicing costs are much different.}
\label{f:csm17}
\end{figure}

\begin{example} {\rm 
As shown in Fig.~\ref{f:csm17},  let $G$ be the open unit disc, while $G_n$ is the unit disc minus a sector  with angular opening $\theta_n= 1/n$.  As $n\to\infty$ we have
$$\caL^2(G_n)\,\to\,\caL^2(G),\qquad\qquad d_H(G_n,G)\,\to\,0.$$
However, the slicing costs do not converge:
$$\limsup_{n\to\infty} V(G_n)~<~V(G).$$
Notice that in this example the Hausdorff distance between the boundaries  does not decrease to zero. Indeed,
$d_H(\partial G_n,\partial G)=1$ for all $n$.   }
\end{example}

\section{A conjecture on the optimal slicing of a disc}
\label{sec:5}
\setcounter{equation}{0}

\begin{conjecture} {\bf (slicing conjecture).}  \label{c:51}
Among all bounded open sets $G\subset\R^2$ with unit area, the disc
has the maximum slicing cost.
\end{conjecture}

More precisely, calling $V(G)$ the minimum slicing cost of the set $G$ and $V(D)$ the minimum slicing cost of a disc $D$ with unit area,
we conjecture that
\bel{1}V(G)~\leq~V(D)~=~
{2\over 3\sqrt\pi}(1+\log4),\eeq
where equality is achieved if and only if $G$ is a disc. 
An explicit computation of the slicing cost for a disc can be found in the Appendix.
For sets with arbitrary area, a rescaling argument yields
\bel{111}V(G)~\leq~\bigl[\caL^2(G)\bigr]^{3/2} \cdot 
{2\over 3\sqrt\pi}(1+\log 4).\eeq

Conjecture~\ref{c:51} is closely related to the well known isoperimetric conjecture \cite{students, WW},
which we now recall.
Given an open set $G\subset\R^2$ with $\meas(G)=1$, for any $\lambda\in [0,1]$ define
\bel{psil}
I_G(\lambda)~\doteq~\min\Big\{ \H^1(\partial S\cap G)\,;~~S\subseteq G,~~\meas(S)=\lambda\Big\}.\eeq
The function $\lambda\mapsto I_G(\lambda)$ is called the {\bf isoperimetric profile} of the set $G$.

\begin{conjecture}{\bf (isoperimetric conjecture).} \label{c:52}
Let $D$ be a disc with unit area. Then, for any convex set $G\subset\R^2$ with unit area, one has
\bel{isoco}
 I_G(\lambda)~\leq~I_D(\lambda)\qquad\qquad\forall ~\lambda\in [0,1].\eeq
\end{conjecture}

This conjecture has been proved in [14] in two main cases: (i) $G$ is a small perturbation of
the disc, and (ii) $G$ is a smooth convex domain, symmetric w.r.t.\ both coordinate axes, and the
curvature of its boundary $\partial G$ has exactly two local maxima and two local minima. In the case
of regular polygons, the conjecture was proved in [3]. For a general convex set $G$ it remains
open. We recall that, for the related (single-cut) problem of bisecting a convex domain by
the shortest possible curve was fully proved in
\cite{EFKNT}.
\v

To see how (\ref{isoco}) can help in proving the slicing conjecture,  assume that $G$ 
has unit area and admits a 
slicing $t\mapsto \Omega(t)$ such that 
\bel{osli}\H^1\bigl(\partial\Omega(t)\bigr) ~=~I_G(t)\qquad\qquad \forall t\in [0,1].\eeq
Then by the sufficient conditions it follows that $\Omega(\cdot)$ is optimal.
On the other hand, if the isoperimetric conjecture holds true, then (\ref{isoco}) yields
\bel{VGB} V(G)~=~\int_0^1 \H^1\bigl( \partial \Omega(t)\bigr)\, dt~=~\int_0^1 I_G(t)\, dt ~\leq~\int_0^1 I_D(t)\, dt
~=~V(D).\eeq

One can regard the slicing conjecture as a dynamic counterpart to the isoperimetric conjecture.
However, a positive solution to the  isoperimetric conjecture would not yield a straightforward 
proof of the
slicing conjecture.    Indeed, since every slicing strategy must satisfy the monotonicity property 
(\ref{Om1}), for some sets $G$ the optimal slicing may not satisfy the identity (\ref{osli}).
In this case, the argument (\ref{VGB}) breaks down.

In the remainder of this section we prove that the slicing conjecture holds for a various classes
of sets $G$.

\begin{figure}[ht]
\centerline{\hbox{\includegraphics[width=5cm]{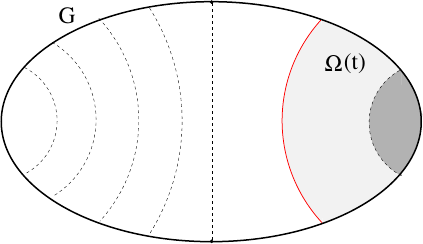}}}
\caption{\small  An optimal slicing of an ellipse $G$.  Here the relative boundaries $\partial \Omega(t)\cap G$ are arcs of circumferences that cross the boundary $\partial G$ perpendicularly.}
\label{f:sli12}
\end{figure}

\subsection{The slicing cost of an ellipse.}   
In this subsection we prove that the slicing conjecture is true for all ellipses.
\begin{proposition}\label{p:51} Let $G$ be an ellipse and let $D$ be a disc with the same area.
Then their slicing costs satisfy $V(G)\leq V(D)$.
\end{proposition}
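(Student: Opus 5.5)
The plan is to use the argument (\ref{VGB}) outlined above. For an ellipse, the isoperimetric conjecture is available, so what remains is to exhibit a \emph{monotone} family of isoperimetric regions.

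Normalize so that $G$ has unit area, with semi-axes $a\geq b$, $\pi ab=1$, and major axis along the $x_1$-axis. An ellipse is a smooth convex domain, symmetric with respect to both coordinate axes. The curvature of $\partial G$ has exactly two local maxima (at the ends of the major axis) and two local minima (at the ends of the minor axis). Hence $G$ falls under case (ii) of the results on Conjecture~\ref{c:52} quoted above, and (\ref{isoco}) gives $I_G(\lambda)\leq I_D(\lambda)$ for all $\lambda\in[0,1]$. If $a=b$ there is nothing to prove, so assume $a>b$.

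The second step is to describe the isoperimetric minimizers $S_\lambda$ in (\ref{psil}) for $\lambda\leq 1/2$. By first variation, the relative boundary $\partial S_\lambda\cap G$ is a single arc of constant curvature (a circular arc or a segment) meeting $\partial G$ orthogonally. By the $x_2\mapsto -x_2$ symmetry, the strict convexity of $G$, and the curvature structure, I will argue that the minimizing arc is symmetric about the major axis. It cuts off a cap around the vertex $(-a,0)$ (or, by symmetry, around $(a,0)$). For $\lambda=1/2$ it degenerates to the minor-axis segment, which is the shortest bisecting curve. For $\lambda>1/2$, $I_G(\lambda)=I_G(1-\lambda)$, and I take $S_\lambda = G\setminus \sigma(S_{1-\lambda})$, where $\sigma$ is the reflection $x_1\mapsto -x_1$; this set contains the segment $\{x_1=0\}\cap G$ and the whole left half.

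The third step, which I expect to be the main obstacle, is to prove that these caps are nested: $\lambda_1<\lambda_2\leq 1/2$ implies $S_{\lambda_1}\subset S_{\lambda_2}$. I would parametrize the orthogonal circular arcs symmetric about the $x_1$-axis by their intersection point $(\xi,0)$ with the axis. I would then show that two such arcs with different $\xi$ do not cross inside $G$: an intersection point together with its mirror image would force two circles orthogonal to $\partial G$ to coincide. Combined with the fact that area increases with $\xi$, this gives nestedness. It also uses the choice of the cap on the same side for every $\lambda$, which is allowed because of the symmetry. For $\lambda\geq1/2$ nestedness then follows from the reflected construction, and $S_\lambda$ with $\lambda\leq1/2$ lies in the left half, hence in every $S_\mu$ with $\mu\geq1/2$. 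Thus $t\mapsto\Omega(t)\doteq S_t$ satisfies (\ref{Om1})-(\ref{Om2}) and (\ref{osli}), and (\ref{VGB}) yields $V(G)\leq\int_0^1 I_G\leq\int_0^1 I_D=V(D)$.

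If identifying the minimizers as above proves too delicate, I would use a fallback that avoids the isoperimetric profile altogether for elongated ellipses. Slicing by chords parallel to the minor axis gives cost $\int_{-a}^a (2b\sqrt{1-x^2/a^2})^2dx = 16ab^2/3 = 16b/(3\pi)$. This is $\leq V(D)$ as soon as $b\leq 3\pi V(D)/16$, which covers all ellipses with $b/a\leq \pi b^2\approx 0.878$. The nearly circular case would then need the argument above, or a perturbation argument around the disc (case (i) of the quoted results).
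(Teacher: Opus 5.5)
Your route is the paper's. The result for doubly symmetric convex domains with two curvature maxima and two minima gives (\ref{isoco}). One then needs a nested family of circular arcs orthogonal to $\partial G$, symmetric about the major axis, sweeping from a major-axis vertex to the minor-axis chord and reflected for the other half, so that (\ref{osli}) and (\ref{VGB}) apply. Your step 2 asserts rather than proves that these arcs realize $I_G$, but the paper takes this for granted too, so that is not where the proposal falls short.

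The gap is in the step you yourself call the main obstacle: your non-crossing argument has no content. Two distinct circles centered on the $x_1$-axis that meet off the axis always meet in exactly a mirror pair $p,\bar p$. So finding $p$ and its reflection in the intersection contradicts nothing. What you would need is that at most one circle through $p$ and $\bar p$ meets $\partial G$ orthogonally, and that is just the non-crossing claim restated. For a disc it follows from inversion in $\partial D$: an orthogonal circle through $p$ also passes through the inverse point of $p$, and three points determine a circle. An ellipse has no such structure. The same objection applies to parametrizing the arcs by their axis point $\xi$, which presupposes that each point of the axis lies on only one such arc.

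That reflection symmetry alone cannot suffice is seen from the other axis, to which your argument applies verbatim. For $x^2/4+y^2<1$, the circle centered on the $y$-axis (the minor axis) and orthogonal to $\partial G$ at $(\pm s,\sqrt{1-s^2/4})$ has center $(0,2/\sqrt{4-s^2})$. Its arc meets the $y$-axis at height about $0.11$ for $s=1$, about $-0.26$ for $s=1.9$, and at heights tending to $0$ as $s\to 2$. So this family is not monotone, and its arcs do cross.

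The paper supplies the missing input by computation:
\begin{itemize}
\item it parametrizes by the endpoint abscissa $s$ and writes the arcs as $\varphi_s(x)=c(s)-\sqrt{r(s)^2-x^2}$;
\item it uses that $r$ is increasing to get $\partial_s\varphi_s(x)\leq c'(s)-r'(s)$;
\item it verifies by explicit differentiation that $c-r$ is strictly decreasing, where the sign of one term relies precisely on the arcs being symmetric about the major axis.
\end{itemize}
You need this estimate or an equivalent one.

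Your fallback does not remove the need. The chord slicing (cost $16ab^2/3$, correctly computed) handles only $b/a\lesssim 0.878$. For nearly circular ellipses, case (i) of the quoted results again yields only (\ref{isoco}), not the nested family required for (\ref{osli}).
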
 

{\bf Proof.} {\bf 1.} 
To fix ideas, assume $a<b$ and consider the ellipse
\bel{Geq} G~=~\left\{ (x,y)\,;~~{x^2\over a^2} + {y^2\over b^2}~<~1\right\}.\eeq
The set $G$
 is symmetric w.r.t.~both coordinate axes. Moreover, the curvature of its boundary $\partial G$
has exactly two minima and two maxima.  By the results in \cite{WW}
the set $G$ thus satisfies the inequalities (\ref{isoco}).
To conclude that $G$ satisfies the slicing conjecture, it thus suffices to show that $G$ admits a slicing 
$t\mapsto\Omega(t)$ such that (\ref{osli}) holds.  

As shown in  Fig.~\ref{f:sli12}, we define $\Omega(t)\subseteq G$ to be a set with area $t$,
whose relative boundary $G\cap \partial \Omega(t)$ is an arc of circumference, crossing
$\partial G$ perpendicularly at both endpoints.

To prove that this is indeed an optimal slicing, it remains to show that these arcs of circumferences
do not intersect each other.
\v
{\bf 2.} For $x\in [-a,a]$, consider the function parameterizing the upper portion of the ellipse:
\bel{phidef}y~=~\phi(x)~\doteq~b\sqrt{1-{x^2\over a^2}}\,,
\qquad\qquad \phi'(x)~=~{-bx\over a \sqrt{a^2-x^2}}\,.\eeq

\begin{figure}[ht]
\centerline{\hbox{\includegraphics[width=9cm]{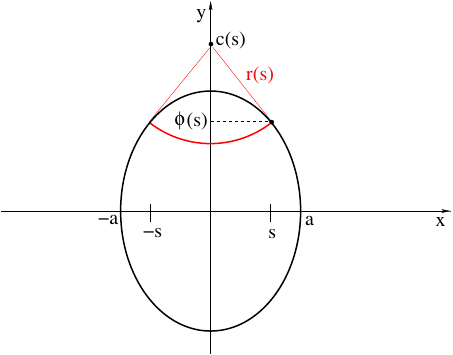}}}
\caption{The functions $\phi(s)$, $c(s)$ and $r(s)$,  
used in the proof of Proposition~\ref{p:51}.}\label{f:sli13}
\end{figure}

For every $s\in\,]0,a[\,$ there is a circumference with center along the $y$-axis which crosses 
$\partial G$ perpendicularly at the points $\bigl(s, \phi(s)\bigr)$ and $(-s, \phi(s))$. 
Its center is at the point $(0, c(s))$, where
\bel{cs} c(s)\,=\,\phi(s)-s\,\phi'(s)~=~b\sqrt{1-{s^2\over a^2}}+  {bs^2\over a \sqrt{a^2-s^2}}
~=~{ab\over \sqrt{a^2-s^2}}\,.\eeq
Its radius is
\bel{rs}r(s)\,=\,s\sqrt{1+ [\phi'(s)]^2}~=~s\sqrt{ 1+ {b^2 s^2\over a^2(a^2-s^2)}}\,.\eeq
For any fixed $s$, we parameterize the corresponding arc of circumference by setting
$$\varphi_s(x)\,=\,c(s)-\sqrt{[r(s)]^2-x^2},\quad\qquad x\in\,]-s,s[\,.$$
To prove that these arcs of circumferences do not intersect each other, it suffices to show that
$${d\over ds}\varphi_s(x)\,<\,0\qquad\quad\forall  x\in\,]-s,s[\,.
$$
\v
{\bf 3.}
Observing that, for any  $x\in\,]-s,s[\,$,
$${d\over ds}\varphi_s(x)~=~c'(s)-{r(s)r'(s)\over\sqrt{[r(s)]^2-x^2}}~\leq~
c'(s)-r'(s)~=~{d\over ds}\varphi_s(0)\,,$$
it is enough to prove that
\bel{dpneg}{d\over ds}\varphi_s(0)~<~0\qquad\qquad \forall s\in \,]0,a[\,.\eeq

Toward this goal, we perform the change of variable $\sigma=s/ a\in [0,1]$. 
Recalling (\ref{cs})-(\ref{rs}), we compute
$$\bega{l}\ds {1\over b} \cdot {d\over d\sigma} \bigl( c(a\sigma)) - r(a\sigma)\bigr)
~=~\ds{d\over d\sigma}\left({1\over\sqrt{1-\sigma^2}}-\sigma
\sqrt{ {a^2\over b^2}+{\sigma^2\over 1-\sigma^2}}\right)
\\[4mm]
\qquad =~\ds{d\over d\sigma}\left({1\over\sqrt{1-\sigma^2}} -
{\sigma\over\sqrt{1-\sigma^2}}\sqrt{{a^2\over b^2}\big(1-\sigma^2\big)+\sigma^2}\right)
\\[4mm]
\qquad\ds =~{\sigma\over\big(1-\sigma^2\big)^{3/2}}-
{1\over\big(1-\sigma^2\big)^{3/2}}\sqrt{{a^2\over b^2}\big(1-\sigma^2\big)+\sigma^2}
  - {1\over\sqrt{1-\sigma^2}}\sqrt{{a^2\over b^2}\big(1-\sigma^2\big)+\sigma^2}\\[4mm]
 \ds \qquad\qquad -
{\sigma\over\sqrt{1-\sigma^2}}{\sigma\left(1-{a^2\over b^2}\right)\over\sqrt{{a^2\over b^2}\big(1-\sigma^2\big)+\sigma^2}}
\\[4mm]
\ds\qquad
<~{1\over\big(1-\sigma^2\big)^{3/2}}\cdot \left( \sigma-
\sqrt{{a^2\over b^2}\big(1-\sigma^2\big)+\sigma^2}\right)~\leq ~0\,.
\enda
$$
Going back to the original variables, this establishes the inequality (\ref{dpneg}),
completing the proof.   \endproof

\begin{figure}[ht]
\centerline{\hbox{\includegraphics[width=10cm]{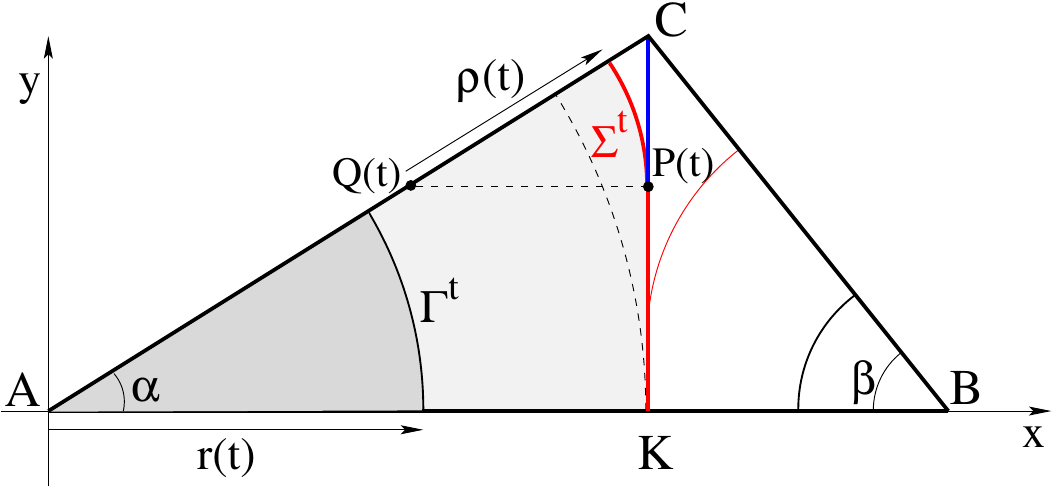}}}
\caption{
Constructing a suboptimal slicing of a triangle.}\label{f:sli11}
\end{figure}

\subsection{The slicing cost of a triangle.}   
We now prove that the slicing conjecture is also true for all triangles.
\begin{proposition} Let $G$ be a triangle and let $D$ be a disc with the same area.
Then their slicing costs satisfy $V(G)\leq V(D)$.
\end{proposition}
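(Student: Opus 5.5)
The plan is to exhibit an explicit, possibly suboptimal, slicing $t\mapsto\Omega(t)$ of the triangle $G$ (normalized to unit area) whose cost
$$J(\Omega)=\int_0^1\H^1\bigl(G\cap\partial\Omega(t)\bigr)\,dt$$
is at most $V(D)={2\over 3\sqrt\pi}(1+\log 4)\approx 0.898$. Since $V(G)\leq J(\Omega)$, this suffices. No isoperimetric profile information is needed.

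Let $A,B$ be the vertices with the two smallest angles $\alpha\leq\beta$, so $\alpha\leq\pi/3$ and $\alpha+\beta\leq 2\pi/3$. The slicing has three phases.
\begin{itemize}
\item[(i)] For $t\in[0,t_1]$, let $\Omega(t)$ be the circular sector of $G$ centered at $A$ with area $t$. Its relative boundary has length $\sqrt{2\alpha t}$.
\item[(ii)] For $t\in[1-t_2,1]$, let $\Omega(t)=G\setminus S_B(1-t)$, where $S_B(s)$ is the sector centered at $B$ with area $s$. The relative boundary has length $\sqrt{2\beta(1-t)}$.
\item[(iii)] On the middle interval $[t_1,1-t_2]$, interpolate monotonically by a family of nested cuts: straight segments, or arcs of circles, sweeping from the arc $\partial S_A(t_1)$ to the arc $\partial S_B(t_2)$.
\end{itemize}
The sectors are admissible as long as their radii do not exceed the distances from $A$ and $B$ to the opposite sides. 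Phases (i)–(ii) then cost
$${2\over 3}\sqrt{2\alpha}\,t_1^{3/2}+{2\over 3}\sqrt{2\beta}\,t_2^{3/2}.$$
For the equilateral triangle one can take $t_1=t_2=1/2$ with no middle phase, giving cost $\approx 0.68<0.898$. This comfortable margin suggests the construction has room to spare for nearby shapes.

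For a general triangle, the scaling in (\ref{111}) reduces the problem to a two-parameter family indexed by $(\alpha,\beta)$. In the middle phase I would use segments parallel to the side $AB$, or to the side opposite $A$. Their lengths are bounded by the length of the longest chord of $G$ in that direction at the corresponding area level, and can be written explicitly through the similar sub-triangles: a line parallel to a side of length $a$, cutting off area $t$, has length $a\sqrt t$. This yields a cost $F(\alpha,\beta;t_1,t_2)$ given in closed form. For each $(\alpha,\beta)$ I would choose $t_1,t_2$ as large as admissibility permits, that is, until each sector touches the opposite side.

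The main obstacle is the degenerate regime of very thin triangles, with $\alpha\to 0$. There the sector phase at $A$ lasts a long time at very low cost, since its length is $\sqrt{2\alpha t}$. However, the sector at $B$ and the parallel-line phase must be controlled because the sides become long. I expect to handle this by bounding the parallel-cut phase using the shortest side, since the smallest angle is opposite the shortest side, and checking that the total stays below $0.898$. The remaining verification is an elementary but careful two-variable inequality. I would split it into the cases $\beta\leq\pi/3$ and $\beta>\pi/3$, and check the inequality by monotonicity in each variable, using numerical confirmation on a compact parameter region if necessary.
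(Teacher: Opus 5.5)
Your overall strategy matches the paper's: bound $V(G)$ by the cost of an explicit slicing consisting of a sector at $A$, a transition, and a complementary sector at $B$. However, the pieces you specify do not form a slicing, and the decisive piece is missing. Phases (i) and (ii) are compatible with the monotonicity (\ref{Om1}) only if $S_A(t_1)\cap S_B(t_2)=\emptyset$, i.e.\ only if the two radii sum to at most $|AB|$. Your admissibility rule (radius up to the distance to the opposite side) ignores this, and so does your choice of $t_1,t_2$ ``until each sector touches the opposite side''. The equilateral check violates it. At unit area $|AB|=2\cdot 3^{-1/4}\approx 1.52$, while the sectors of area $1/2$ have radius $\sqrt{3/\pi}\approx 0.98$. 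They overlap, so $S_A(t)\not\subseteq G\setminus S_B(1-t')$ for $t<t'$ near $1/2$, and $0.68$ is not the cost of any slicing. With $t_1=t_2$, disjointness forces $t_1\le \pi/(6\sqrt3)\approx 0.30$. Roughly $40\%$ of the area must then be swept by the middle phase, so the ``comfortable margin'' is not established.

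That middle phase is never constructed, and neither family you name works. Lines parallel to $AB$ do not separate $A$ from $B$: a region bounded by such a line that contains $S_A(t_1)$ also contains a neighbourhood of $B$, hence meets $S_B(t_2)$. A region cut off by a line parallel to $BC$ that contains $S_A(t_1)$ has area strictly larger than $t_1$, so the area function would jump at $t_1$. These cuts also end at a strip along $BC$, not at $G\setminus S_B(t_2)$. Mixed cuts (part arc, part segment) are unavoidable, and then the formula $a\sqrt t$ no longer gives their length. So the closed form $F(\alpha,\beta;t_1,t_2)$ is not available, and neither is the final inequality; the thin-triangle regime is left to ``numerical confirmation''.

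The missing idea is the paper's construction:
\begin{itemize}
\item Split $G$ by the altitude $CK$ from the largest angle; $K$ lies inside $AB$ because $\alpha,\beta<\pi/2$.
\item Grow the sector at $A$ only up to radius $|AK|$.
\item Fill the right triangle $AKC$ by cuts made of the segment $KP$ plus an arc tangent to $KC$ at $P$ and centered on $AC$.
\item Do the same in $BKC$.
\end{itemize}
Since $|AK|+|KB|=|AB|$, the two sectors never collide, and the cost is explicit.

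If you adopt this construction, recompute its cost. The middle integral equals $x_C^3(\tan\alpha-\alpha)\bigl(\frac{\tan\alpha}{6}+\frac{\alpha}{3}\bigr)$, so $J_1=\frac16\tan\alpha(\tan\alpha+\alpha)\,x_C^3$. The paper's formula for this integral has $-\alpha$, which makes it negative, e.g.\ for the equilateral triangle. The final bound then needs two facts:
\begin{itemize}
\item $\frac{\tan\alpha+\alpha}{\tan^2\alpha}<2\cot\alpha$;
\item $\frac{\tan\alpha\tan\beta}{\tan\alpha+\tan\beta}=|CK|/|AB|\le\sqrt3/2$, which holds because $C$ lies within distance $|AB|$ of both $A$ and $B$, as $AB$ is the longest side.
\end{itemize}
Together they give $J<\frac23\,3^{1/4}\,T^{3/2}<V(D)$, since $3^{1/4}\approx 1.316<(1+\log 4)/\sqrt\pi\approx 1.346$.
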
 

{\bf Proof.} {\bf 1.} Let  $\caL^2(G)=\caL^2(D)=T$. We will construct a (non optimal)
slicing $t\mapsto \Omega(t)$ of the triangle $G$ such that
$$J\,\doteq\,\int_0^T\H^1\bigl( G\cap\partial \Omega(t)\bigr)\, dt~<~V(D).$$ 

Referring to Fig.\ref{f:sli11}, let $A,B,C$ be the vertices of the triangle $G$ and let $\alpha,\beta,\gamma$ be the corresponding angles. We assume that these angles satisfy
$0<\alpha\leq\beta\leq\gamma$. 

A slicing $t\mapsto \Omega(t)\subseteq G$ is constructed as follows.  
Call $K$ the perpendicular projection of $C$ on the segment $AB$.
Let $T = \caL^2(G)$ be the area of the triangle $ABC$ and call $T_1$ the area of the smaller triangle
$AKC$.
Moreover, call $\bar t$ the area of the intersection of the triangle $G$ with the disc centered at $A$
with radius $|K-A|$.    Call $\hat t$ the  area of the intersection of the triangle $G$ with the disc centered at $B$
with radius $|K-B|$.   

The slicing $t\mapsto \Omega(t)$ can now be defined as follows:
\begi
\item
For $t\in [0,\bar t]$, the set $\Omega(t)$ is a region  with area $t$ inside the triangle, which is bounded 
by an arc of circumference $\Gamma^t$ centered at $A$ (the dark shaded region in Fig.~\ref{f:sli11}).

\item For $t\in [\bar t, T_1]$, $\Omega(t)$  is a region  with area $t$ inside the triangle (the lightly shaded region in Fig.~\ref{f:sli11}),
bounded by  a curve $\Sigma^t$.  Here $\Sigma^t$ is the union of a vertical segment 
$K P(t)$ and an arc of circumference which is tangent to $KC$ at some point $P(t)$ and crosses the 
side $AC$ perpendicularly.  This implies that the center of the circumference $Q(t)= 
\bigl(x(t), y(t)\bigr)$ is a point on the segment $AC$.  We call $\rho(t)$ the radius of this circumference.
\item For $t\in [T_1, T-\hat t]$,  $\Omega(t)$ is a region bounded by a curve $\eta(t)$, where $\eta(t)$ is the union of a vertical segment 
$K P(t)$ and an arc of circumference which is tangent to $KC$ at $P(t)$ and crosses the 
side $BC$ perpendicularly.
\item For $t\in [T-\hat t, T]$,  $\Omega(t)$ is a region  with area $t$ inside the triangle, 
bounded by an arc of circumference 
centered at $B$.
\endi
\v
{\bf 2.} We now compute the cost of the above slicing:
$$J(\Omega)~=~J_1+J_2~=~\int_0^{T_1}\H^1\bigl( G\cap\partial \Omega(t)\bigr)\, dt+
\int_{T_1}^T\H^1\bigl( G\cap\partial \Omega(t)\bigr)\, dt\,.$$
The first integral can be split as
$$J_1~=~\int_0^{\bar t}\H^1\bigl(\xi(t)\bigr)\, dt+\int_{\bar t}^{T_1}\H^1\bigl(\eta(t)\bigr)\, dt\,.$$
For $t\in [0, \bar t]$, calling $r(t)$ the radius of the arc of circumference $\Gamma^t$, we find
$$t~=~\caL^2(\Omega(t))~=~{\alpha r^2(t)\over 2},\qquad 0\leq r(t)\leq |K-A|.$$
The length of this arc is
$$\H^1(\Gamma^t) ~=~\alpha r(t).$$
Introducing the area function
$$\A(r)\,\doteq~{\alpha r^2\over 2}, \qquad\qquad {d\over dr} \A(r)\,=\, \alpha r,$$
and recalling that $\A\bigl(r(t)\bigr)=t$, 
by changing the variable of integration we obtain
\bel{J11}\int_0^{\bar t}\H^1\bigl(\Gamma^t\bigr)\, dt~
=~\int_0^{\bar t}\alpha r(t) \, dt~=~\int_0^{x_C}\alpha r \cdot {d\A\over dr} \, dr ~=~
\int_0^{x_C}\alpha^2 r^2 \,dr~=~{\alpha^2\over3}x_C^3\,.\eeq
Here and in the sequel $x_C$ denotes  the $x$-coordinate of the vertex $C$.
\v
Next for $t\in [\bar t, T_1]$,  calling
$x(t)$ the $x$-coordinate of the point $Q(t)$, we find
$$t~=~\caL^2(\Omega(t))~=~{\alpha \bigl(x_C-x(t)\bigr)^2\over 2}+{\bigl(2x_C-x(t)\bigr)x(t)\tan\alpha\over2},\qquad \quad x(t)\in[0,x_C].$$
In this case, the length of the relative boundary $G\cap \partial \Omega(t)$ is computed by
$$\H^1(\Sigma^t)\,=\, x(t)\tan \alpha + \alpha \bigl(x_C- x(t)\bigr).$$
Consider the new area function
$$\A(x)\,\doteq~{\alpha \bigl(x_C-x\bigr)^2\over 2}+{\bigl(2x_C-x\bigr)x\tan\alpha\over2}, \qquad\qquad {d\over dx} \A(x)\,=\, (x_C-x)(\tan\alpha-\alpha),$$
defined for $x\in [0, x_C]$. 
Recalling that $\A\bigl(x(t)\bigr)=t$, 
by changing the variable of integration we now obtain
\bel{J12}\bega{l}\ds
\int_{\bar t}^{T_1}\H^1\bigl(\Sigma^t\bigr)\, dt~
=~\int_{\bar t}^{T_1}\Big( x(t)\tan \alpha + \alpha \bigl(x_C- x(t)\bigr)\Big)\, dt \\[4mm]
\ds\qquad=~\int^{x_C}_0 \Big( x\tan \alpha + \alpha \bigl(x_C- x\bigr)\Big)\, {d\A\over dx}\, dx\\[4mm]
\ds\qquad= ~\int^{x_C}_0\Big( x\tan \alpha + \alpha \bigl(x_C- x\bigr)\Big)(x_C-x)(\tan\alpha-\alpha)\,dx\\[4mm]
\ds\qquad =~x_C^3\left({1\over 6}\tan\alpha(\tan\alpha-\alpha)-{1\over3}\alpha^2\right).
\enda\eeq
Combining (\ref{J11}) with (\ref{J12}) we conclude
$$J_1~=~{\tan\alpha(\tan\alpha-\alpha)\over6}\,x_C^3\,.$$
By entirely similar calculations one obtains
$$J_2~=~{\tan\beta(\tan\beta-\beta)\over6}\,(x_B-x_C)^3\,.$$
\v
{\bf 3.} 
Finally, using the identities
$$T_1\,=\,{1\over 2}x_C^2\tan\alpha,\qquad  T-T_1\,=\,{1\over2}(x_B-x_C)^2\tan\beta, 
\qquad  x_C\tan\alpha=(x_B-x_C)\tan\beta,$$
we conclude that the total cost of the slicing strategy $t\mapsto\Omega(t)$ satisfies
$$
J(\Omega)~=~T^{3/2}\,{\sqrt2\over3}\left({\tan\alpha\tan\beta\over\tan\alpha+\tan\beta}\right)^{3/2}
\left({\tan\alpha-\alpha\over\tan\alpha^2}+{\tan\beta-\beta\over\tan\beta^2}\right)\,.
$$
We now observe that the inequalities $\alpha\leq\beta\leq \gamma$, $\alpha+\beta+\gamma=\pi$
imply
 $\alpha\leq \pi/3$.
In particular, $\tan\alpha\leq\sqrt{3}.$
Moreover, since $\alpha,\beta <\pi/2$, we have
$$
\frac{\tan\alpha-\alpha}{\tan^2\alpha}
<\frac{1}{\tan\alpha},
\qquad
\frac{\tan\beta-\beta}{\tan^2\beta}
<\frac{1}{\tan\beta}.
$$

Therefore,
$$
\begin{aligned}
&\left(\frac{\tan\alpha\tan\beta}
{\tan\alpha+\tan\beta}\right)^{3/2}
\left(
\frac{\tan\alpha-\alpha}{\tan^2\alpha}
+
\frac{\tan\beta-\beta}{\tan^2\beta}
\right)\\
&\qquad<
\left(\frac{\tan\alpha\tan\beta}
{\tan\alpha+\tan\beta}\right)^{3/2}
\left(
\frac{1}{\tan\alpha}
+
\frac{1}{\tan\beta}
\right)\\
&\qquad=
\left(\frac{\tan\alpha\tan\beta}
{\tan\alpha+\tan\beta}\right)^{1/2}.
\end{aligned}
$$

Since
$$
\frac{\tan\alpha\tan\beta}
{\tan\alpha+\tan\beta}
~<~\min \lbrace{\tan\alpha,\tan\beta}\rbrace
~=~\tan\alpha
~\leq~\sqrt3,
$$
we conclude that
$$
\begin{aligned}
\left(\frac{\tan\alpha\tan\beta}
{\tan\alpha+\tan\beta}\right)^{3/2}
\left(
\frac{\tan\alpha-\alpha}{\tan^2\alpha}
+
\frac{\tan\beta-\beta}{\tan^2\beta}
\right)
\,<\,3^{1/4}
<\frac{\sqrt2(1+\log4)}{\sqrt{\pi}}.
\end{aligned}
$$
Therefore
$$J(\Omega)~<~  {2(1+\log4)\over3\sqrt{\pi}}\,T^{3/2}~=~V(D),$$
where $D$ is a disc with the same area $\caL^2(D)=T$.   This completes the proof. 
\endproof

%

\section{Appendix: the slicing cost of a disc}
\label{sec:A}
\setcounter{equation}{0}
{\small
Let  $D$ be a disc of radius $R>0$.  To compute
its minimum slicing cost, we consider the slicing obtained by arcs of circumferences which cross 
the boundary $\partial D$
perpendicularly.    Notice that every such arc of circumference provides a solution to Dido's problem
\cite{Dido}.
Namely, it minimizes the length of the relative boundary among all subsets of $D$ with the same area.  As in Theorem~1.1 of \cite{BCM26}, this guarantees that the slicing strategy is optimal.

We start by parametrizing a half circumference of $\partial D$ by
$$\phi(x)~=~R-\sqrt{R^2-x^2}.$$
For every $x>0$ there is a circumference with center along the $y$-axis which crosses 
$\partial G$ perpendicularly at the point $\bigl(x, \phi(x)\bigr)$. Its center is at the point $(0, c(x))$,
$$c(x)~=~\phi(x)-x\,\phi'(x)~=~R-\sqrt{R^2-x^2}- {x^2\over \sqrt{R^2-x^2}}\,.$$
the radius is
\bel{rx}r(x)~=~x\sqrt{1+ [\phi'(x)]^2}~=~x\sqrt{1+{x^2\over R^2-x^2}}={x\over\sqrt{1-{x^2\over R^2}}}\,.
\eeq
\begin{figure}[ht]
\centerline{\hbox{\includegraphics[width=7cm]{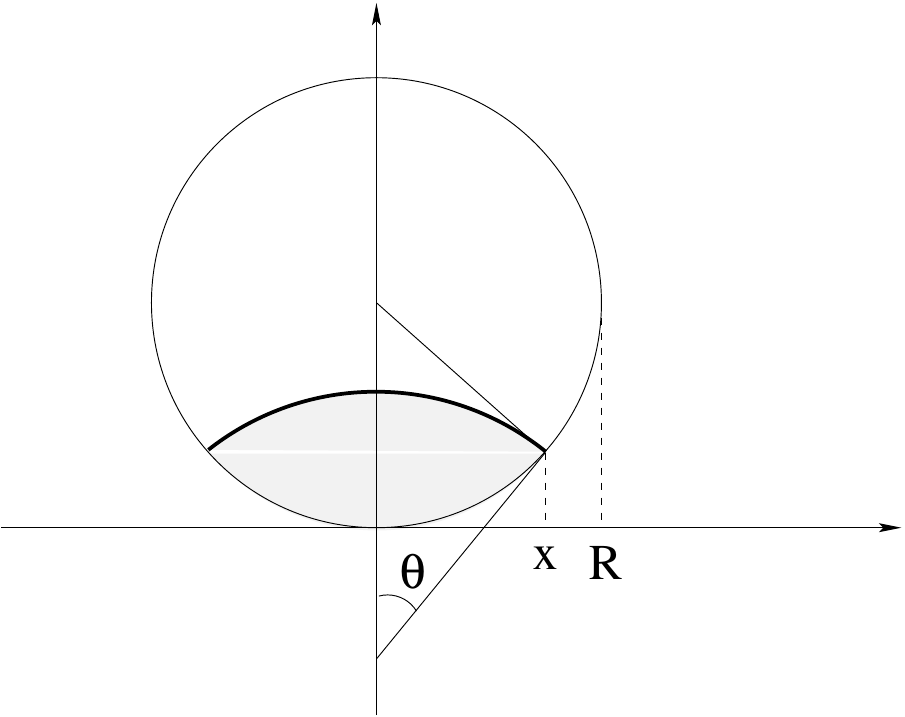}}}
\caption{Computing the slicing cost for a disc of radius $R$.}\label{f:sli7}
\end{figure}
From basic trigonometry, we recall that the area of the region between a circle of radius $r$ and a chord
of length $2l$ is 
$$A~=~\theta r^2 - l \sqrt{r^2-l^2}\,,\qquad\qquad \theta = \arcsin {l\over r}\,.$$

For every $x>0$, taking $l=x$, the area bounded by the two perpendicular circumferences 
with radii $R$ and $r(x)$ (see  Fig.~\ref{f:sli7})
is thus
\begin{align*}
A(x)~&=~R^2\arcsin{x\over R} - x\sqrt{R^2-x^2}+  
 r^2(x)\arcsin{x\over r(x)} - x \sqrt{r^2(x)-x^2}\\
 &=~R^2\arcsin{x\over R}-
 {xR\over\sqrt{1-{x^2\over R^2}}}+{x^2\over 1-{x^2\over R^2}}\arcsin\sqrt{1-{x^2\over R^2}}\,.
 \end{align*}
A slicing $t\to\Omega(t)$ should satisfy $\mathcal L^2\bigl(\Omega(t)\bigr)=t$, for all 
$0\leq t\leq T\doteq \pi R^2$. 
By symmetry, $J= J_1+J_2$, where
$$
J_2\,=\,J_1~\doteq~\int_0^{T/2}2\theta(x(A)) r(x(A))\, dA.$$
Here
$r(x)$ is the radius at (\ref{rx}) while the angle is
$$\theta(x)~=~\arcsin{x\over r(x)}~=~\arcsin\sqrt{1-{x^2\over R^2}}.$$
Since 
$$
A'(x)={2x\over\left(1-{x^2\over R^2}\right)^2}
\arcsin\sqrt{1-{x^2\over R^2}}-{2x^2\over R \left(1-{x^2\over R^2}\right)^{3/2}}\,,
$$
we obtain
\begin{align*}
J_1&~=~\int_0^{T/2}2\theta(x(A)) r(x(A))\, dA~=~\int_0^{R}2\theta(x) r(x)A'(x)\, dx\\
&=~4\int_0^R{x\over \sqrt{1-{x^2\over R^2}}}\arcsin\sqrt{1-{x^2\over R^2}}\left({x\over\left(1-{x^2\over R^2}\right)^2}
\arcsin\sqrt{1-{x^2\over R^2}}-{x^2\over R \left(1-{x^2\over R^2}\right)^{3/2}}\right)dx\\
&=~4R^3\int_0^R{1\over R\sqrt{1-{x^2\over R^2}}}
\left({{x^2\over R^2}\over \left(1-{x^2\over R^2}\right)^2}\left(\arcsin\sqrt{1-{x^2\over R^2}}\right)^2
-{{x^3\over R^3}\over\left(1-{x^2\over R^2}\right)^{3/2}}\arcsin\sqrt{1-{x^2\over R^2}}\right)dx\,.
\end{align*}
Applying the change of variable $\sigma=\arcsin\sqrt{1-{x^2\over R^2}}$, we obtain
$$
J_1~=~4 R^3\int_0^{\pi/2}\left(\sigma^2{(\cot \sigma)^2\over(\sin \sigma)^2}-\sigma(\cot \sigma)^3\right)d\sigma\,.
$$
Integrating by parts, one finds
\begin{align*}
\int&\left(\sigma^2{(\cot \sigma)^2\over(\sin \sigma)^2}-\sigma(\cot \sigma)^3\right)d\sigma~=~
\int \sigma^2{(\cot \sigma)^2\over(\sin \sigma)^2}d\sigma-{\sigma^2\over2}(\cot \sigma)^3-\int{3\over2} \sigma^2{(\cot \sigma)^2\over(\sin \sigma)^2}d\sigma\\
&\qquad\qquad=\,-{1\over2}\int\left(\sigma^2{(\cot \sigma)^2\over(\sin \sigma)^2}-\sigma(\cot \sigma)^3\right)d\sigma-{\sigma^2\over2}(\cot \sigma)^3
-{1\over2}\int \sigma(\cot \sigma)^3d\sigma,
\end{align*}
and hence
$$
\int\left(\sigma^2{(\cot \sigma)^2\over(\sin \sigma)^2}-\sigma(\cot \sigma)^3\right)d\sigma~=\,
-{\sigma^2\over3}(\cot \sigma)^3
-{1\over3}\int \sigma(\cot \sigma)^3d\sigma\,.
$$
Since
\begin{align*}
\int \sigma(\cot \sigma)^3d\sigma&=-{\sigma\over2(\sin \sigma)^2}-\sigma\log(\sin \sigma)+
\int\left({1\over2(\sin \sigma)^2}+\log(\sin \sigma)\right)d\sigma\\
&=-{\sigma\over2(\sin \sigma)^2}-\sigma\log(\sin \sigma)-{1\over2}\cot \sigma+
\int\log(\sin \sigma)d\sigma\,,
\end{align*}
one has
$$
\int\left(\sigma^2{(\cot \sigma)^2\over(\sin \sigma)^2}-\sigma(\cot \sigma)^3\right)d\sigma=
-{\sigma^2\over3}(\cot \sigma)^3+{\sigma\over6(\sin \sigma)^2}+{1\over6}\cot \sigma+
{1\over3}\sigma\log(\sin \sigma)-{1\over3}\int\log(\sin \sigma)d\sigma\,.$$
Finally, since $\sigma\log(\sin \sigma)\to0$  as $\sigma\to 0+$, and moreover
\begin{align*}
-{\sigma^2\over3}(\cot \sigma)^3+{\sigma\over6(\sin \sigma)^2}-{1\over6}\cot \sigma
&=~{1\over 6\sin \sigma^3}\big(-2\sigma^2(\cos \sigma)^3+\sigma\sin \sigma-(\sin \sigma)^2\cos \sigma\big)\\
&=~{-2\sigma^2+\sigma^2+\sigma^2+\O(1)\cdot\sigma^4\over 6\sigma^3}~=~\O(1)\cdot \sigma,
\end{align*}
we conclude
$$
J_1~=~4R^3\left({\pi\over12}-{1\over3}\int_0^{\pi/2}\log(\sin \sigma)d\sigma\right)=
{\pi R^3\over3}(1+\log4)\,.
$$
Here we have used the identity
 $$\int_0^{\pi/2}\log(\sin \sigma)d\sigma~=\,-{\pi\over2}\log2\,.$$
 By symmetry, recalling that $T\doteq \pi R^2$, the total cost of this slicing is
 $$J~=~2J_1
 ~=~{2\pi R^3\over3}(1+\log4)
~=~T^{3/2} {2\over3\sqrt{\pi}}(1+\log4)\,.$$

}

\end{document}